\documentclass[11pt]{amsart}
\usepackage{amsmath,amssymb, amsthm, mathabx,amscd, amsfonts, amssymb, hyperref, mathrsfs, textcomp, epsfig, graphicx, IEEEtrantools, tikz, verbatim, subcaption, pinlabel}
\newtheorem*{thm}{Theorem}

\newtheorem{prop}{Proposition}[section]
\newtheorem{lemma}{Lemma}[section]

\newtheorem{cor}{Corollary}[section]

\theoremstyle{definition}

\theoremstyle{remark}
\newtheorem{remark}{Remark}[section]
\newtheorem{example}{Example}[section]
\newtheorem{exercise}{Exercise}[section]

\newcommand{\matZ}{\mathbb{Z}}

\newcommand{\lk}{ {\rm lk}}

\newcommand{\matR}{\mathbb{R}}
\newcommand{\matC}{\mathbb{C}}
\newcommand{\matS}{\mathbb{S}}

\newcommand{\matH}{\mathbb{H}}
\newcommand{\matRP}{\mathbb{RP}}

\begin{document}

\title[Symplectic hyperbolic $\pi$-orbifolds]{Symplectic four-manifolds \\ and hyperbolic $\pi$-orbifolds}

\author{Francesco Lin}
\address{Department of Mathematics, Columbia University} 
\email{flin@math.columbia.edu}

\author{Bruno Martelli}
\address{Dipartimento di Matematica, Universit\`a di Pisa} 
\email{bruno.martelli@unipi.it}
\begin{abstract}
We construct closed symplectic 4-manifolds admitting the structure of hyperbolic $\pi$-orbifold, addressing LeBrun's conjecture. Our simplest example has Euler characteristic $304$, is tessellated by $64$ copies of the right-angled hyperbolic $120$-cell and admits a four-fold branched covering over a product of two surfaces of genus $4$. 
\end{abstract}

\maketitle

LeBrun conjectured in \cite{LeBrun} that all closed oriented hyperbolic four-manifolds have vanishing Seiberg-Witten invariants \cite{Wit, Mor}. While this has been verified in some specific examples \cite{AgolLin,BFS, LM}, the evidence towards it is still very scarce, and it is natural to wonder whether the conjecture actually holds true or not. In this sense, the foundational result of Taubes \cite{Tau} suggests the very natural goal of searching for a hyperbolic four-manifold admitting a symplectic structure, i.e. a closed non-degenerate $2$-form $\omega$. This is a particularly challenging endeavor because these two geometric structures (hyperbolic and symplectic) are very different in nature, and the techniques commonly used to construct them seem to have very little interactions.
\par
In this direction, Stover \cite{Sto} recently showed that the complex projective plane $\mathbb{CP}^2$ (which admits the natural Fubini-Study symplectic form) can be realized as a hyperbolic \textit{orbifold} with rather complicated isotropy groups, including the dihedral group with eight elements inside $SO(3)\subset SO(4)$. Specifically, he explicitly describes a smooth four-manifold triangulated by $60$ copies of a fixed hyperbolic Coxeter simplex, which is then verified to be diffeomorphic to $\mathbb{CP}^2$ using the computational software Regina \cite{Reg}.
\par
The goal of this paper is to show that in fact there are explicit (much larger) examples of symplectic four-manifolds which can be realized as a hyperbolic orbifold with the simplest possible kind of singularity, namely with only isotropy group $\matZ/2\matZ$ generated by an isometric involution acting at the fixed points with eigenvalues $\{-1,-1,1,1\}$. We will refer to this as a hyperbolic $\pi$-\textit{orbifold structure}. Our main theorem is then the following.
\begin{thm}
There exist infinitely many closed symplectic four-manifolds admitting the structure of hyperbolic $\pi$-orbifold.
\end{thm}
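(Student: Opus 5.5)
The plan is to build a branched covering $X\to \Sigma_g\times\Sigma_h$ of degree four, with branch locus a union of ``horizontal'' and ``vertical'' surfaces $\{p_i\}\times\Sigma_h$ and $\Sigma_g\times\{q_j\}$ meeting transversally. Such a branched cover will be simultaneously symplectic and a hyperbolic $\pi$-orbifold, as follows.

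\textbf{Hyperbolic side.} I start from the right-angled hyperbolic $120$-cell $P$. Its $120$ pentagonal... actually dodecahedral facets admit colourings, and gluing copies of $P$ along facets according to a colouring produces closed hyperbolic manifolds and orbifolds. Choose a small number of colours so that the resulting object is an orbifold $O$ in which some facets are left as mirrors, or better, are folded. Since $P$ is right-angled, doubling along a set of pairwise disjoint or orthogonal facets gives, at codimension-two strata where two ``unfolded'' mirror facets meet orthogonally, exactly a $\matZ/2$ cone singularity with eigenvalues $\{-1,-1,1,1\}$. Concretely, I would take the orbifold obtained from $2^k$ copies of $P$ via a colouring, and then quotient by an involution, or equivalently reflect in selected facets. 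The singular locus is then a union of totally geodesic surfaces, each tessellated by right-angled pentagons. The underlying topological space $X$ is a smooth $4$-manifold, because a $\pi$-rotation cone locus is topologically smooth.

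\textbf{Symplectic side.} Arrange the colouring so that $X$ fibres combinatorially as a branched cover of a product. Since $P$ is not a product, I cannot use the polytope shape directly. Instead I use a map $O\to \Sigma\times\Sigma'$ induced by two ``projections'' of the colouring: group the facet colours into two families, each defining a Coxeter-like quotient map to a hyperbolic $2$-orbifold (right-angled polygon reflection orbifolds). The product of these two maps is a branched cover of degree four onto a product of surfaces of genus $4$ (matching the abstract), branched along the singular surfaces. The branch locus is a union of horizontal and vertical fibres meeting transversally, which is a symplectic configuration for the product form after orienting appropriately. I then apply the standard fact that a branched cover along a symplectic surface, or a transverse positive configuration, carries a symplectic form (Gompf/Auroux-style pull back $\pi^*\omega+\epsilon\,\eta$, with a local model near the branch locus and the double points). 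A local model for the $\matZ/2\times\matZ/2$ cover near crossings is $(z,w)\mapsto(z^2,w^2)$, which is holomorphic, so no extra issue arises there.

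\textbf{Infinitely many, and the main obstacle.} For infinitely many examples, I take finite covers. The fundamental group of the base surfaces surjects to many finite groups, and pulling back along unbranched covers of $\Sigma_g\times\Sigma_h$ gives covers that remain symplectic and are still hyperbolic $\pi$-orbifolds, with Euler characteristic growing. The main obstacle is the combinatorial step: finding a colouring of the $120$-cell whose associated map really factors as a degree-four branched cover of a product with transverse branch locus, and checking that the singular set has only $\pi$-isotropy. I would search computationally among colourings compatible with a splitting of the $120$ facets into two classes and verify the fibre structure. I would also verify the Euler characteristic: $64$ copies times $\chi$-contribution $17/4$ for $P$ gives $272$, with the singular locus correction producing $304$.
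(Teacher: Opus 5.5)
There is a genuine gap. The step you call the ``main obstacle'' is the whole content of the theorem, and the structure you predict for it is not what occurs.

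\textbf{The construction itself.} You never produce the branched cover; you defer it to a computer search. The mechanism you suggest (two ``Coxeter-like'' quotient maps from the $120$-cell to right-angled polygon orbifolds) is not how such a map arises. The paper's key observation runs against your remark that $P$ ``is not a product''. Merging the $120$ dodecahedra into the $20$ six-tuples $\{q_ip^k\}_{0\le i\le 5}$ and $\{q_ip^k\}_{6\le i\le 11}$, which are adapted to the combinatorial Hopf fibration, turns $P$ into a PL product $D_1\times D_2$ of two decagons. The product of the standard $2$-colourings of the decagons then gives $\Sigma\times\Sigma$ with $g(\Sigma)=4$.

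The $\pi$-orbifold comes from an \emph{intermediate} merging $P\to P'$, with blocks $\{q_0p^k,q_1p^k,q_4p^k\}$, $\{q_2p^k,q_3p^k\}$, $\{q_5p^k\}$ and the analogous ones for $i\ge 6$. Two adjacent right-angled dodecahedra carrying the same colour give cone angle $\pi$ along their common pentagon. To have only $\mathbb{Z}/2\mathbb{Z}$ isotropy, these merging pentagons must be pairwise disjoint. A colouring $\lambda'$ of $P'$ compatible with $\lambda''$ through $(\mathbb{Z}/2\mathbb{Z})^6\to(\mathbb{Z}/2\mathbb{Z})^4$ then yields $p\colon M\to\Sigma\times\Sigma$.

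Note that $p$ is \emph{not} branched along the $\pi$-singular surfaces. Those surfaces are the ramification of $\tilde M\to M$ (the merging locus of $P\to P'$), whereas $p$ ramifies over the merging locus of $P'\to P''$.

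Your Euler characteristic check is also wrong. We have $\chi(P)=17/2$, so $64\chi(P)=544$ is the orbifold Euler characteristic of $M$. Then $\chi(M)=544+\tfrac12\bigl(160\cdot(-2)+40\cdot(-4)\bigr)=304$. Starting from $272$, corrections from surfaces of negative Euler characteristic could only lower the value.

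\textbf{The branch locus and the symplectic step.} More seriously, the branch locus is not made of horizontal and vertical fibres. In each prism $D_1\times e$ the merging locus projects onto $e$ (the ``circle with five prongs''). For $P'\to P''$ it consists of two squares $\alpha\times\beta$ and $\alpha'\times\beta$. So the branch locus of $p$ is the $80$ product tori $\tilde\alpha\times\tilde\beta$ of Proposition~\ref{80tori}. These are products of closed \emph{curves}, hence Lagrangian for every product form.

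Your configuration is in fact impossible as stated. If the $\pi$-singular surfaces were ramification curves of a map branched along horizontal and vertical fibres, they would be complex curves in the complex surface $X$. A torsion-free finite orbifold cover would then be a compact complex surface homeomorphic to a closed hyperbolic $4$-manifold. This is excluded: by Carlson--Toledo when $b_1$ is even, and when $b_1$ is odd because non-K\"ahler surfaces have $c_1^2\le 0$, whereas here $c_1^2=2\chi+3\sigma=2\chi>0$.

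Hence the ``standard fact'' about branched covers along symplectic configurations does not apply. The missing ideas are those of Section~\ref{symplecticstuff}:
\begin{enumerate}
\item Find a class $[\eta]$ with $\langle[\eta],[T_i]\rangle>0$ for all $i$. This is possible because each torus meets another one transversely in a single positive point and all intersections are non-negative, so no positive linear relation among the $[T_i]$ exists.
\item Perturb the product form to $\omega+\varepsilon\eta'$, where $\eta'$ restricts to area forms on all $T_i$ (Lemma~\ref{coho}).
\item Use that all intersections are positive (arranged by reversing the orientation of one factor) to isotope the tori into $\omega$-orthogonal position via Gompf, obtaining a plumbing model.
\item Since there are no triple points, factor $p$ into two double covers with only simple branching (Proposition~\ref{twodoublebranch}).
\item Apply Auroux's correction twice, checking that the second-stage branch tori remain symplectic after the first stage.
\end{enumerate}

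Your final step, pulling back along unbranched covers of $\Sigma\times\Sigma$ to get infinitely many examples, agrees with the paper.
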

These manifolds are obtained as covering spaces of a specific `simplest' example $M$ having Euler characteristic $\chi(M)=304$ and for which the singular locus of the $\pi$-orbifold structure is the disjoint union of $160$ surfaces of genus $2$ and $40$ surfaces of genus $3$. The theorem will be proven `with bare hands' by combining constructive techniques in hyperbolic geometry and symplectic topology.  
\\
\par
More specifically, on the hyperbolic geometry side, we will provide a concrete description of $M$ as the union of $2^6=64$ copies the right-angled $120$-cell (a remarkable polyhedron with $120$ dodecahedral facets) via the construction of manifolds via colourings of polyhedra introduced in \cite{V}. 
\par
Combining this description with beautiful combinatorial properties of the $120$-cell (closely related to its structure as discrete Hopf fibration \cite{SchSeg}) we will be able to prove that $M$ admits a four-fold branched covering
\begin{equation*}
p\colon M\rightarrow \Sigma\times\Sigma
\end{equation*}
where $\Sigma$ is a closed surface of genus $4$ and $p$ has very simple singularities, namely (using coordinates $(z,w)$ in $\mathbb{C}^2)$ of the form\footnote{In particular, throughout the paper we will consider `branched coverings' which are not necessarily cyclic (the ones most commonly considered in the four-dimensional topology literature).}
\begin{equation*}
(z,w)\mapsto(z^2,w)\text{ or }(z^2,w^2).
\end{equation*}
Furthermore, the 
ramification locus in $\Sigma\times\Sigma$ is a large but very concrete collection of transversely intersecting product tori.
\par
On the symplectic topology side, with this explicit description at hand we will be able to construct a symplectic form $\omega$ on $M$ starting from the natural product one on $\Sigma\times\Sigma$ following techniques introduced by (among the others) Gompf \cite{Gom} and Auroux \cite{Aur}.
\\
\par
Regarding LeBrun's conjecture, by construction each manifold $X$ in our Theorem admits a branched double cover $\tilde{X}$ which is a genuine hyperbolic manifold (for example, for the simplest example $M$ we have $\chi(\tilde{M})=1088$). Unfortunately our techniques run into a serious obstruction when trying to construct a symplectic structure on $\tilde{X}$. The ramification locus of $\tilde{p}:\tilde{X}\rightarrow X$ is the singular set of the $\pi$-orbifold structure of $X$, a collection of disjoint totally geodesics surfaces $\{\Sigma_i\}$ (therefore having self-intersection zero and genus at least $2$) on which the first Chern class of the symplectic structure $c_1(X,\omega)$ (which is defined as $c_1(TX,J)$ for an $\omega$-compatible almost complex structure $J$ on $X$) evaluates to zero. This implies that the adjunction identity
\begin{equation*}
-\langle c_1(X,\omega),[\Sigma_i]\rangle+[\Sigma_i]^2+2-2g(\Sigma_i)=0
\end{equation*}
does \textit{not} hold for the surface $\Sigma_i$, which is therefore not isotopic to a symplectic surface (see for example \cite[Theorem 2.51]{Wen}). Even worse, the same computation (readily generalizing the case of connected branched locus discussed in \cite{RW}) implies that for all the spin$^c$ structures on $\tilde X$ which agree with the pull-back of the canonical one $\mathfrak{s}_{\omega}$ on $X$ on the regular locus of $p$ have negative formal dimension, and therefore have vanishing Seiberg-Witten invariants by definition.
\begin{remark}
Of course, this does not rule out the possibility that some $\tilde{X}$ is indeed symplectic, or more generally has a non-vanishing Seiberg-Witten invariant for some other spin$^c$ structure; we will leave it as a interesting open question to determine the latter in full generality (including higher dimensional moduli spaces). For some examples of infinite families of hyperbolic four-manifolds tasselated by right-angled $120$-cells with vanishing Seiberg-Witten invariants, see \cite{BFS}.
\end{remark}
From a geometric perspective, the essential obstruction seems to stem from fact that the composite branched cover
\begin{equation*}
p\circ\tilde{p}:\tilde{M}\rightarrow \Sigma\times \Sigma
\end{equation*}
has new `bad' singularities with isotropy group given by the Klein-four group in $SO(3)\subset SO(4)$ consisting of diagonal matrices, which acts on the preimage of the ramification locus in an \textit{orientation-reversing} fashion. In this sense, one would expect that trying to apply an analogous strategy to find covers of Stover's examples which are symplectic manifolds would run in the same obstruction (even if one could show that the orbifold locus interacts nicely with the symplectic structure of $\mathbb{C}P^2$).

\vspace{0.3cm}

\textit{Organization of the paper.} Section \ref{colours} provides a general introduction to the construction of manifolds via colourings of polyhedra. In Section \ref{mainexample} we discuss the specific examples of interest, and in Section \ref{symplecticstuff} we prove our main result regarding symplectic structures.

\begin{remark} While the content of Sections \ref{colours} and \ref{symplecticstuff} should be well-known to experts in the respective areas, we designed our exposition in order to make the whole paper accessible to anyone in either field.
\end{remark}

\textit{Acknowledgements. }The first author was partially supported by the grants NSF DMS-2503714 and Simons Foundation TSM-00013131; the project started while he was visiting Universit\`a di Pisa in Fall 2025, and he thanks them for the warm hospitality.
\\
\par
\textit{Tool use. }No LLMs were used during the writing of this paper. Pictures were made with Inkscape.

\section{Colourings of polyhedra}\label{colours}
L\"obell \cite{Lobell} built the first closed hyperbolic 3-manifold in 1931 using right-angled dodecahedra, via a method, later
generalised by Vesnin \cite{V} and other authors, that consists of \emph{colouring} the facets of a polyhedron with some appropriate elements of a vector space over $\mathbb{Z}/2\mathbb{Z}$. This method is flexible enough to work in many different contexts and to allow various topological operations. We show here that it can be used to build interesting 
branched coverings. 

\subsection{Abstract polyhedra}
We would like to define some abstract simple polyhedra, that are not necessarily realizable as honest compact polyhedra in $\matR^n$. This is usually best done via simplicial complexes.

\subsubsection{Simplicial complexes}
Let $[m] = \{1,\ldots, m\}$.
We let $2^{[m]}$ denote the set of subsets of $[m]$. Recall that an \emph{abstract simplicial complex} is a subset $K \subset 2^{[m]}$ closed under taking subsets. A subset $\sigma\in K$ of cardinality $k+1$ is an abstract $k$-\emph{simplex}, and when $k=0,1$ we also call it a \emph{vertex} and an \emph{edge}.

Every abstract simplicial complex $K$ has a \textit{geometric realisation} as a simplicial complex $|K| \subset \matR^{m}$ that is the union of the simplexes $|\sigma|$ with vertices $e_i, i \in \sigma$ as $\sigma \in K$ varies. We indicate with $|K|$ both the resulting topological space and its structure as a simplicial complex. 

\subsubsection{Abstract polyhedra}
We now introduce abstract polyhedra.
Let $K \subset 2^{[m]}$ be an abstract simplicial complex such that $\{i\} \in K$ for every $i\in [m]$ and $|K|$ is PL homeomorphic to $S^{n-1}$. 

The \emph{dual cell decomposition} of $|K|$ is defined by taking the barycentric subdivision $|K|'$ of $|K|$, and defining for every $k$-simplex $\sigma \in K$ its \emph{dual $(n-k-1)$-cell} $F_\sigma$ as the union of the simplexes in $|K|'$ that intersect $|\sigma|$ in its barycenter. This is indeed a $(n-k-1)$-cell, because it is PL homeomorphic to the cone over the \emph{link} $\lk(\sigma)$ of $\sigma$, that is
$$\lk(\sigma) = \{\tau \in K\ |\ \tau \cap \sigma = \emptyset, \tau \cup \sigma \in K \} \cong S^{n-k-1}.$$

Let $P$ be the cone over $|K|$, equipped with its dual cell decomposition. A $k$-cell $F_\sigma$ of $\partial P = |K|$ is called a \emph{$k$-face} of $P$. When $k=0,1,n-2,n-1$ the face is also called \emph{vertex}, \emph{edge}, \emph{ridge}, and \emph{facet}. 

We call the resulting $P$ an \emph{abstract polyhedron}, and we consider it only up to PL faces-preserving homeomorphisms. For instance, if $K$ is the boundary of a $n$-simplex, $n$-cross-polytope, icosahedron, 600-cell, then $P$ is a $n$-simplex, $n$-cube, dodecahedron, 120-cell respectively. 

We note that $P$ is always \emph{simple}, that is every $k$-face in $P$ is contained in precisely $n-k$ facets. Every simple compact polyhedron $P \subset \matR^n$ can be obtained in this way (up to PL face-preserving homeomorphisms). Simple compact polyhedra $P\subset \matR^n$ are particular kinds of abstract polyhedra; there are however some abstract polyhedra that may not be realised as a compact polyhedron in $\matR^n$.

The $k$-face $F_\sigma$ of an abstract polyhedron $P$ is itself an abstract polyhedron, obtained from the simplicial complex $\lk_\sigma$. We write $F_i = F_{\{i\}}$ for simplicity. The facets of the abstract polyhedron $P$ are $F_1,\ldots, F_m$, dual to the vertices $1,\ldots, m$ of $K$.

\subsection{Colourings}
To fix notation, from now on $K \subset 2^{[m]}$ is always an abstract simplicial complex such that $\{i\} \in K$ for all $i$ and $|K|$ is PL homeomorphic to $S^{n-1}$, and $P$ is its associated abstract polyhedron. 

\subsubsection{Definition}
Let $V$ be a $c$-dimensional vector space over the field $\matZ/2\matZ$. A $c$-\emph{colouring} of $K$ is a map $\lambda \colon [m] \to V$ that sends the vertices of every 
$\sigma \in K$ to independent vectors. This corresponds to colouring the facets $F_i$ of $P$ with vectors in $V$, such that the colours on any set of incident
facets are independent vectors (we say that some faces are incident if their intersection is non-empty).

We will always suppose that the image of $\lambda$ spans $V$, since this is easily achieved by restricting the codomain.

As an example, we can take $V = (\matZ/2\matZ)^m$ and $\lambda(i) = e_i$. As we will see, for topological reasons it is often preferable to keep $c = \dim V$ as low as possible, so this example is in many cases not the preferred one.

\subsubsection{The manifold $N$}
A colouring $\lambda$ of $K$ defines a PL manifold $N$ that decomposes into $2^c$ copies of $P$, as follows. For every $v\in V$, let $P_v$ be an identical copy of $P$. We start with the disjoint union of $P_v$ as $v \in V$. Then, for every $i\in [m]$ and $v\in V$, we glue the facet $F_i$ in $P_v$ with the facet $F_i$ in $P_{v + \lambda(i)}$ via the obvious identity map. Let $N$ be the resulting topological space.

One verifies easily that $N$ is in fact a compact PL manifold without boundary 
tessellated into the $2^c$ abstract polyhedra $P_v$. If $P$ is a right-angled compact polyhedron in $\matH^n, \matR^n, \matS^n$, then $N$ is naturally a compact hyperbolic, flat, spherical manifold.

Of course, we always have $c\geq n$. When $c=n$ the manifold $N$ is called a \emph{small cover}.
The reason for adopting this terminology is that $P$ can be interpreted as an orbifold, the natural map $N \to P$ (obtained by identifying each $P_v$ with $P$) is an orbifold covering, every manifold covering of $P$ has degree at least $2^n$, and when it has degree precisely $2^n$ it is constructed in this way.

Small covers are typically easier to study, and they were examined extensively by Davis and Januszkiewicz \cite{DJ}. 
As we alluded above, if $c$ is very large the manifold $N$ can be very big and often intractable, especially in higher dimension $n\geq4$. 

\subsubsection{Examples}
Two colourings $\lambda, \lambda'$ on $K$ are isomorphic if they are induced by a combination of a linear isomorphism $V \cong V'$ and a simplicial automorphism of $K$. Isomorphic colourings produce the same $N$, hence 
we consider colourings always up to isomorphism. 

We list some examples. We always take $V = (\matZ/2\matZ)^c$ and denote by $e_1, \ldots, e_c$ its canonical basis.
\begin{enumerate}
\item The $n$-simplex has a unique $n$-colouring, with $\lambda(i) =e_i$ for $i=1,\ldots, n$ and $\lambda(n+1) = e_1+ \cdots + e_{n+1}$, and it gives the small cover $N = \matRP^n$, see \cite{DJ}. It also has a unique $(n+1)$-colouring, with $\lambda(i) = e_i$ for all $i$. In this case $N= S^n$. It has no other colourings.
\item The $n$-cube has a $n$-colouring where opposite facets are assigned the same colour $e_i$, and it produces a flat $n$-torus $N$. It also has other small covers: with $n=2$ we can get the Klein bottle \cite{DJ}, and with $n=3$ we can get the half-twist flat 3-manifold \cite{FKS}. With $n=3$ and $c=4$ we can also get the Hantzsche -- Wendt flat 3-manifold \cite{FKS}. If we only use the vectors $e_i$ to colour the $n$-cube, we always get a $n$-torus \cite{IMM}.
\item The $2k$-gon has a unique 2-colouring that produces an orientable surface $N$. The 2-colouring assigns $e_1$ and $e_2$ alternatively to its edges. The resulting surface has genus $k-1$.
\item The dodecahedron has 25 small covers \cite{GS}. Since the dodecahedron may be realised as a compact right-angled polyhedron in $\matH^3$, these are closed hyperbolic 3-manifolds.
\item The 120-cell has 56 orientable small covers \cite{MZ}. Since the 120-cell may be realised as a compact right-angled polyhedron in $\matH^4$, these are closed hyperbolic 4-manifolds.
\end{enumerate}

\subsection{Basic properties}
Let $N$ be a manifold obtained by assigning a colouring $\lambda$ to an abstract polyhedron $P$. Many interesting topological and geometric properties of $N$ can be inferred from $P$ and $\lambda$. We focus on those that are relevant for us here. 

\subsubsection{{Euler characteristic}}
The orbifold Euler characteristic of $P$ is 
$$\chi(P) = \sum_{i=0}^n \frac {f_i}{2^i}$$
where $f_i$ is the number of $i$-faces of $P$. When $P$ has odd dimension we have $\chi(P)=0$. The Euler characteristic of $N$ is then
$$\chi(N) = 2^c \chi(P)$$
where $c=\dim V$.

\subsubsection{Faces of $P$ induce submanifolds of $N$.} \label{submanifolds:subsubsection}
By identifying all the $P_v$ with $P$ we get a natural map $\pi \colon N \to P$, that is in fact an orbifold covering if we consider $P$ with its natural right-angled orbifold structure. 

The preimage $\pi^{-1}(F)$ of a $k$-face $F\subset P$ is a $k$-dimensional submanifold of $N$ tessellated into some copies of $F$.
If $P$ is a right-angled polyhedron in $\matR^n$, $\matH^n$, or $\matS^n$, the submanifold is totally geodesic.

The colouring $\lambda$ of $P$ descends to $F$: every facet of $F$ is of the type $F \cap F_i$ for a unique facet $F_i$ of $P$, and it inherits the colour $\lambda(i)$ of $F_i$. Let $N_F$ be the $k$-manifold produced by this colouring. The preimage $\pi^{-1}(F)$ consists of some copies of $N_F$. The number of copies is equal to the index of the subgroup in $V$ generated by the colours of the facets of $P$ that intersect $F$.

\subsubsection{Covering spaces}
Let $P$ be an abstract polyhedron. Suppose that $\lambda, \lambda'$ are two colourings with vector spaces $V, V'$, such that $\lambda' = f\circ\lambda$ for some surjective homomorphism $f\colon V \to V'$. Let $N, N'$ be the resulting manifolds. We have a natural covering $N \to N'$ with degree the cardinality of $\ker f$, defined by sending $P_v$ to $P_{f(v)}$.

\subsection{Merging facets and branched coverings}\label{branchedcovering}
We now introduce an operation on polyhedra that consists of \emph{merging} some of their facets. This operation combined with colourings will allow us to construct branched coverings between the manifolds. Merging is best described as an operation on simplicial complexes $K$, and then transported to abstract polyhedra $P$.

\subsubsection{Merging facets} \label{merging:subsubsection}
Let $P$ be an abstract polyhedron constructed from some abstract simplicial complex $K \subset 2^{[m]}$ where $|K|$ is PL homeomorphic to $S^{n-1}$. Every surjective map $g\colon [m] \to [m']$ defines a new abstract simplicial complex $K' = \{g(\sigma)\ |\ \sigma \in K\}\subset 2^{[m']}$ {and a surjective simplicial map $g\colon K \to K'$}. Suppose now that $g^{-1}(\sigma')$ is {a} collapsible {subcomplex}, for all $\sigma' \in K'$. By \cite{Cohen}
the space $|K'|$ is also PL homeomorphic to $S^{n-1}$ and hence $K'$ provides a new abstract polyhedron $P'$. 

The new abstract polyhedron $P'$ is obtained from $P$ by \emph{merging} some adjacent facets, according to the surjective map $g$. The facet $F'_i$ of $P'$ is obtained by merging the facets $F_j$ of $P$ with $g(j)=i$. Here are some examples:
\begin{enumerate}
\item By merging $k$ pairs of consecutive edges of a $2k$-gon we get a $k$-gon; 
\item By merging 6 appropriate pairs of adjacent pentagons in a dodecahedron we get a cube, see Figure \ref{merge:fig}; 
\item By merging 4 appropriate triples of pentagons on a dodecahedron we get a tetrahedron. 
\end{enumerate}
We will see below that by merging 20 appropriate 6-tuples of dodecahedra in the 120-cell we get a product of two decagons, and this fact is the core of our construction.

\begin{figure}
 \begin{center}
\centering
\labellist
\small\hair 2pt
\pinlabel $P$ at 90 20
\pinlabel $P'$ at 250 20
\pinlabel $P$ at 500 0
\pinlabel $P'$ at 730 0
\endlabellist
  \includegraphics[width = 12.5 cm]{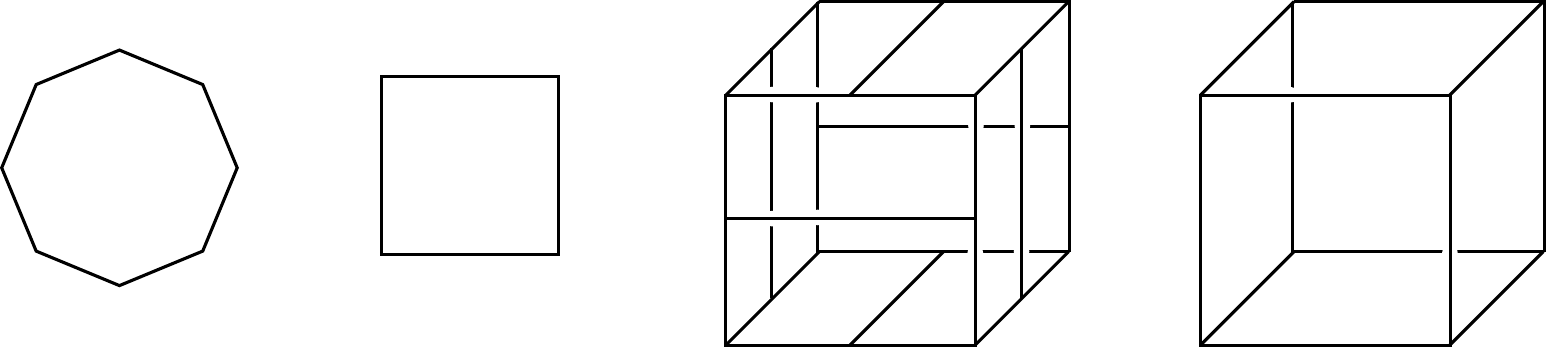}
 \end{center}
 \caption{A square $P'$ obtained by merging pairs of edges of an octagon $P$, and a cube $P'$ obtained by merging pairs of faces of a dodecahedron $P$ (the dodecahedron is described here only combinatorially). In both cases $P$ has a right-angled hyperbolic structure and $P'$ a right-angled Euclidean one.}  \label{merge:fig}
\end{figure}

Let $g_* \colon P \to P'$ be a PL homeomorphism that merges the facets, that is such that $g_*(F_i) \subset F_{g(i)}'$ for every $i$. The \emph{merging locus} in $P'$ is the image of the codimension-2 faces in $P$ separating the pairs of facets that have been merged. For instance, the merging locus in Figure \ref{merge:fig} consists of four middle points in the edges of the square, and 6 middle segments in the faces of the cube.

\subsubsection{Branched coverings} \label{branched:subsubsection}
Let $P'$ be obtained from $P$ by merging some of its facets, according to some map $g\colon [m] \to [m']$.
Let $\lambda, \lambda'$ be colourings of $P, P'$ with vector spaces $V,V'$, producing manifolds $N,N'$, and suppose that there is a homomorphism $f\colon V \to V'$ such that $\lambda'\circ g = f \circ \lambda$. When $P=P'$ and $g={\rm id}$, we have seen above that we get a covering $N \to N'$. 

In this more general setting, we get a branched covering $h\colon N \to N'$, defined by sending $P_v$ to $P_{f(v)}'$ via $g_*$. This map is a branched covering with ramification locus the preimage in $N'$ of the merging locus of $P'$. 

\begin{prop}
   The branched covering is regular (i.e. for every $p,q \in N$ with $h(p)=h(q)$ there is a PL homeomorphism $\varphi \colon N \to N$ with $\varphi(p)=q$ and $h\circ \varphi = h$). 
\end{prop}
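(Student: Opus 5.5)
The natural deck transformations are translations. For every $u \in \ker f \subset V$, define $\varphi_u \colon N \to N$ by sending $P_v$ identically to $P_{v+u}$. I would show that these maps are well-defined PL homeomorphisms commuting with $h$, and that $\ker f$ acts transitively on each fibre of $h$.

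\emph{Well-definedness.} The gluings defining $N$ identify the facet $F_i$ of $P_v$ with $F_i$ of $P_{v+\lambda(i)}$. Translation by $u$ carries this pair to $F_i$ of $P_{v+u}$ and $F_i$ of $P_{v+u+\lambda(i)}$, which is again a gluing pair. So $\varphi_u$ descends to a PL homeomorphism of $N$, with inverse $\varphi_u$ itself.

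\emph{Commuting with $h$.} By definition $h$ sends $P_v$ to $P'_{f(v)}$ via $g_*$. Since $f(v+u)=f(v)$, we get $h\circ\varphi_u=h$.

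\emph{Transitivity on fibres.} Let $p\in P_v$ and $q\in P_w$ with $h(p)=h(q)$, where the points are represented in some copies of $P$. I would not expect to be able to conclude $w-v\in\ker f$ directly, because $p$ and $q$ may lie on faces, which have several representatives. The plan is to pick representatives carefully.

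Write $x=\pi(p)$ and $y=\pi(q)$ for the images in $P$. Since $g_*$ is a homeomorphism $P\to P'$, the equality $h(p)=h(q)$ means that the points $g_*(x)$ in $P'_{f(v)}$ and $g_*(y)$ in $P'_{f(w)}$ coincide in $N'$. It follows that $g_*(x)$ and $g_*(y)$ lie on the same face of $P'$ and that $x=y$.

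Let $x$ lie in the interior of the face $F_\sigma$ of $P$, where $\sigma$ records the facets $F_i$ containing $x$. Its image $g_*(x)$ lies on the face $F'_{g(\sigma)}$. The representatives of $p$ in $N$ are the points $x\in P_{v+s}$ with $s$ in the span $\langle\lambda(\sigma)\rangle$. Similarly, the representatives of $h(p)$ in $N'$ are the points $g_*(x)\in P'_{f(v)+t}$ with $t\in\langle\lambda'(g(\sigma))\rangle$.

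From $P'_{f(w)}$ and $P'_{f(v)}$ representing the same point we obtain
\[ f(w)-f(v)\in\langle\lambda'(g(\sigma))\rangle = f\bigl(\langle\lambda(\sigma)\rangle\bigr), \]
where the equality uses $\lambda'\circ g=f\circ\lambda$. Hence $f(w)-f(v)=f(s)$ for some $s\in\langle\lambda(\sigma)\rangle$. Replacing the representative $P_v$ of $p$ by $P_{v+s}$, we get $f(w)=f(v+s)$. Then $u=w-v-s\in\ker f$ and $\varphi_u(p)=q$.

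\emph{Main obstacle.} The delicate step is the claim that $g_*(x)=g_*(y)$ in $N'$ forces $x=y$ in $P$, together with the precise description of the face of $P'$ on which $g_*(x)$ lies. Merged facets mean that $x$ on a ridge of the merging locus lies in the interior of a facet of $P'$. So the identifications in $N'$ near $g_*(x)$ are governed by $\lambda'$ of the facets of $P'$ containing $g_*(x)$, and these facets are exactly the $g(i)$ for $i\in\sigma$.

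To handle this, I would use that the projection $N'\to P'$ is well-defined and that $g_*$ is a face-respecting homeomorphism. Then $g_*(x)=g_*(y)$ in $P'$, so $x=y$, and the facets of $P'$ through $g_*(x)$ are the images $g(i)$ for $i\in\sigma$. With this in place the span computation above goes through. If $g(\sigma)$ has repeated entries, the span identity still holds, since a span is insensitive to repetitions.
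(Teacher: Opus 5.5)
Your proposal is correct and takes the same approach as the paper: the deck transformations are the translations $P_u\mapsto P_{u+(w-v)}$ by elements of $\ker f$. Your adjustment of the representative of $p$ by some $s\in\langle\lambda(\sigma)\rangle$, using $\lambda'\circ g=f\circ\lambda$, simply spells out the paper's one-line claim that $h(p)=h(q)$ holds exactly when $p,q$ come from the same point of $P$ in copies $P_v,P_w$ with $v-w\in\ker f$, for suitably chosen representatives.
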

\begin{proof}
    We have $h(p)=h(q)$ if and only if $p \in P_v$ and $q \in P_w$ correspond to the same point in $P$ and $v-w \in \ker f$. The automorphism $\varphi$ that acts as $\varphi\colon P_u \to P_{u-v+w}$ via the identity map sends $p$ to $q$ and we get $h\circ \varphi = h$.
\end{proof}

\begin{example}
Let $P'$ be obtained from $P$ by merging some pairs of facets:
some examples are in Figure \ref{merge:fig}. Let $\lambda'$ be any colouring in $V'$ for $P'$. We construct a colouring $\lambda$ in $V = V'\times \matZ/2\matZ$ for $P$ simply by assigning the vectors $(\lambda(F),0)$ and $(\lambda(F),1)$ arbitrarily to the two facets of $P$ that are merged in the facet $F$ of $P'$. We get a degree 2 branched covering $N \to N'$, whose ramification locus is the codimension-two submanifold that is the preimage of the merging locus. 

The two examples in Figure \ref{merge:fig} are particularly interesting because in both cases $P$ and $P'$ can be given a right-angled hyperbolic and flat structure, respectively. Therefore we get a hyperbolic 2- or 3-manifold $N$ that is a double branched covering over a flat 2- or 3-manifold $N'$. The ramification locus consists of some points and some closed geodesics, respectively. The 3-dimensional example is a variation of a well-known construction of Thurston.
In both examples, by interpreting $P'$ as a hyperbolic right-angled $P$, we assign a hyperbolic $\pi$-orbifold structure to $N'$. 

Our main goal is now to construct some similar 4-dimensional examples.
\end{example}

\vspace{0.3cm}

\section{The construction of $M$}\label{mainexample}
We now apply the techniques introduced in the previous section to construct our 4-dimensional hyperbolic $\pi$-orbifold $M$. We start by investigating the geometry and combinatorics of the 120-cell.

\subsection{The 120-cell}
We recommend \cite{SchSeg} for a beautiful geometric introduction to the 120-cell.
Recall that the \emph{quaternion group} is
$$Q_8 = \{\pm 1, \pm i, \pm j, \pm k\}.$$
The \emph{binary tetrahedron group} is
$$T_{24}^* = Q_8 \sqcup \big\{ \tfrac 12(\pm 1 \pm i \pm j \pm k) \big\}.$$

Let $\phi = (1+\sqrt 5)/2$ be the golden ratio.  
The \emph{binary icosahedral group} $I_{120}^*$ is obtained by adding to $T_{24}^*$ the $8\times 12 = 96$ elements that are of the form 
$\frac 12 (a+bi+cj+dk)$ where $(a,b,c,d)$ is either
$(\pm \phi, \pm \phi^{-1}, \pm 1,0)$ or obtained from it via an even permutation. Note that $I_{120}^*$ consists of unit vectors; in fact, it is the preimage of the group of orientation-preserving isometries of the regular dodecahedron under the double cover $S^3\rightarrow SO(3)$, where $S^3$ is the set of unitary quaternions. The 600-cell is the convex hull of $I_{120}^*$. 

Let $K$ be the boundary of the 600-cell. Two vertices $q,q' \in I_{120}^*$ are adjacent (that is, they are connected by an edge) if and only if $\langle q,q' \rangle = \phi/2 \sim 0.81$. The 120-cell is the polyhedron $P$ obtained from the simplicial complex $K$. The facets of $P$ are thus $F_q$ with $q \in I_{120}^*$.

\subsubsection{The combinatorial Hopf fibration}
The element
$$p = \frac 12\big(\phi + \phi^{-1}i + j\big)$$
has order 10. Since $1$ and $p$ are adjacent in $K$, the 10 elements in the cyclic group $C=\langle p \rangle$ are the vertices of a decagon contained in the plane spanned by $1$ and $p$. The decagon corresponds to a ring of 10 dodecahedral facets of the 120-cell that form a solid torus $T_0$. Each dodecahedron intersects the two adjacent ones in two opposite faces.
The elements
\begin{gather*}
q_1 = \frac 12(\phi + i + \phi^{-1}k), \qquad q_2 = \frac 12(\phi + i - \phi^{-1}k), \qquad q_3 = \frac 12(\phi + \phi^{-1}j - k) \\
q_4 = \frac 12 ( \phi - \phi^{-1}i + j), \qquad q_5 = \frac 12(\phi + \phi^{-1}j + k)
\end{gather*}
are precisely the vertices in $K$ that are adjacent to both $1$ and $p$. The order-5 isometry $r(x) = pxp^{-1}$ fixes $1$ and $p$ and acts cyclically on these as
$$q_1 \to q_2 \to q_3 \to q_4 \to q_5 \to q_1.$$
The 6 distinct left cosets
$$C, \quad q_1C, \quad q_2C, \quad q_3C, \quad q_4C, \quad q_5C$$
are the vertices of 6 decagons, each contained in a different plane, which determine 6 disjoint rings $T_0, T_1, T_2, T_3, T_4, T_5$ of 10 dodecahedral facets each, in the 120-cell, each homeomorphic to a solid torus. Overall these 60 facets form a large solid torus, containing the smaller solid torus $T_0$ in its interior. See some beautiful pictures in \cite[Figure 6.2]{SchSeg} and \cite{rings}.

We can check that left multiplication by $k$ sends the plane containing $C$ to its orthogonal plane. Therefore it sends this large solid torus to the complement large solid torus. We obtain 5 more elements
\begin{gather*}
q_7 = kq_1 = \frac 12(-\phi^{-1} +j + \phi k), \quad q_8 = kq_2 = \frac 12(\phi^{-1} + j + \phi k), \\ 
q_9 = k q_3 = \frac 12(1 - \phi^{-1}i + \phi k), \quad
q_{10} = k q_4 = \frac 12(-i - \phi^{-1}j + \phi k), \\ 
q_{11} = k q_5 = \frac 12(-1 - \phi^{-1} i + \phi k).
\end{gather*}
We write $q_0 = 1$ and $q_6=k$. The 12 left cosets of $C$ are
$$q_0C, \quad \ldots \quad q_{11}C.$$
These determine the {rings} $T_{0}, \ldots, T_{11}$, {each homeomorphic to a solid torus} $S^1\times D^2$. The decomposition of the 120-cell into these 12 {rings} can be interpreted as a combinatorial Hopf fibration. 

There is an order 20 group $G = \matZ/4\matZ \times \matZ/5\matZ$ of isometries generated by left multiplication by $k$ and right multiplication by $p^2$, that preserves the decomposition into {rings} and acts freely and transitively on the 20 dodecahedra that lie in the two central {rings} $T_0 \cup T_6$.

The isometry $r(x) = pxp^{-1}$ generates another order 5 group $H = \langle r \rangle$ that fixes each dodecahedron in $T_0 \cup T_6$ and permutes cyclically the {rings} as
\begin{gather*}
T_1 \to T_2 \to T_3 \to T_4 \to T_5 \to T_1 \qquad
T_{11} \to T_{10} \to T_9 \to T_{8} \to T_{7} \to T_{11}.
\end{gather*}

\begin{remark} \label{nonadj:remark}
Two dodecahedra that lie in two non-successive {rings} in these cycles (e.g $T_2$ and $T_4$, or $T_7$ and $T_{10}$) are not adjacent.
\end{remark}

\begin{exercise} \label{vertex:exercise}
Every vertex of the 120-cell belongs to exactly 3 rings. (Hint: $3=150\times 12/600$)
\end{exercise}

The groups $G$ and $H$ generate altogether an order 100 group of isometries that preserve the combinatorial Hopf fibration.

\subsection{The hyperbolic 4-manifold $\tilde{M}$}
We now construct a closed hyperbolic 4-manifold $\tilde{M}$ by assigning a colouring $\lambda$ to the 120-cell. 

\subsubsection{The colouring}
Every element of $I_{120}^*$ and can be written uniquely as $q_ip^k$ for some $i\in \{0,\ldots, 11\}$ and $k\in\{0,\ldots, 9\}$. Set $V= (\matZ/2 \matZ)^7$ and
\begin{gather*}
\lambda (p^k) = \begin{cases} e_1+e_5+e_7 {\ \rm if \ } k {\ \rm odd}, \\ e_2+e_6+e_7 {\ \rm if \ } k {\ \rm even}, \end{cases} 
\ \lambda (q_1p^k) = \lambda (q_4p^k) = \begin{cases} e_1+e_5 {\ \rm if \ } k {\ \rm odd}, \\ e_2+e_6 {\ \rm if \ } k {\ \rm even}, \end{cases} \\
\lambda (q_2p^k) = \begin{cases} e_1+e_7 {\ \rm if \ } k {\ \rm odd}, \\ e_2+e_7 {\ \rm if \ } k {\ \rm even}, \end{cases} 
\  \lambda (q_3p^k) = \lambda (q_5p^k) = \begin{cases} e_1 {\ \rm if \ } k {\ \rm odd}, \\ e_2 {\ \rm if \ } k {\ \rm even}, \end{cases} \\
\lambda (q_6p^k) = \begin{cases} e_3+e_5+e_7 {\ \rm if \ } k {\ \rm odd}, \\ e_4+e_6+e_7 {\ \rm if \ } k {\ \rm even}, \end{cases} 
\!\!\!\! \lambda (q_7p^k) = \lambda (q_{10}p^k) = \begin{cases} e_3+e_5 {\ \rm if \ } k {\ \rm odd}, \\ e_4+e_6 {\ \rm if \ } k {\ \rm even}, \end{cases} \\
\lambda (q_8p^k) = \begin{cases} e_3+e_7 {\ \rm if \ } k {\ \rm odd}, \\ e_4+e_7 {\ \rm if \ } k {\ \rm even}, \end{cases} 
\  \lambda (q_9p^k) = \lambda (q_{11}p^k) = \begin{cases} e_3 {\ \rm if \ } k {\ \rm odd}, \\ e_4 {\ \rm if \ } k {\ \rm even}. \end{cases}\end{gather*}
We prove that this is indeed a colouring. First, by Remark \ref{nonadj:remark}, two adjacent vertices always get distinct colours. Then, Exercise \ref{vertex:exercise} easily implies that 4 vertices of a tetrahedron always get independent colours.

Interpreting the 120-cell $P$ as a right-angled hyperbolic polyhedron, the colouring $\lambda$ produces a hyperbolic manifold $\tilde{M}$ tessellated into $2^7$ right-angled hyperbolic 120-cells. {The orbifold Euler characteristic of $P$ is 
$$\chi(P) = \frac{600}{2^4} - \frac{1200}{2^3} + \frac{720}{2^2} - \frac{120}{2} + 1 = \frac{17}2$$ 
and therefore 
$$\chi(\tilde{M}) = 2^7 \times\frac{17}2 = 1088.$$
\begin{remark}
The colouring is symmetric with respect to the action of the group $G = \matZ/4\matZ \times \matZ/5\matZ$ {(that is, for every $g \in G$ there exists an isomorphism $f\colon V \to V$ such that $\lambda \circ g = f \circ \lambda$)} but not with respect to that of $H$.
\end{remark}

\begin{remark}{The rationale behind our choice of $\lambda$ is probably obscure at this point. The colouring $\lambda$ was obtained by refining some simpler colourings $\lambda'$ and $\lambda''$ that will be described in the next pages. The colouring $\lambda$ is a refinement of $\lambda'$, that is in turn a refinement of the very natural $\lambda''$.
}
\end{remark}

\subsection{The product $\Sigma \times \Sigma$ of two surfaces}
We now construct an eight-fold branched covering $\tilde{M} \to \Sigma \times \Sigma$ of our hyperbolic manifold $\tilde{M}$ onto a product of two surfaces. This is done by merging some facets of the 120-cell following the framework discussed in Section \ref{branchedcovering}.

\subsubsection{Merging the facets of the 120-cell}
We now consider the following partition of $I_{120}^*$ into 20 subsets of 6 elements each:
$$\big\{p^k, q_1p^k, q_2p^k, q_3p^k, q_4p^k, q_5p^k\big\}, 
\quad 
\big\{q_6p^k, q_7p^k, q_8p^k, q_9p^k, q_{10}p^k, q_{11}p^k\big\}.
$$

Here $k\in \{0, \ldots, 9\}$.
The partition is invariant under the actions of both groups $G$ and $H$.
Let $K''$ be the simplicial complex with 20 vertices obtained from $K$ by quotienting each set of the partition to a vertex, as explained in Section \ref{merging:subsubsection}. We can verify that the partition satisfies the collapsibility condition stated in Section \ref{merging:subsubsection}, and hence $K''$ gives rise to a polyhedron $P''$ obtained by merging some facets of $P$. (We use the symbols $K''$ and $P''$ instead of $K'$ and $P'$ because we will define some intermediate objects $K'$ and $P'$ below.) 

It turns out that $P''$ is a product of two decagons. The 20 new facets 
\begin{align*}
F_k'' & = F_{p^k} \cup F_{q_1p^k} \cup F_{q_2p^k} \cup F_{q_3p^k} \cup F_{q_4p^k} \cup F_{q_5p^k}, \\
F_{10+k}'' & = F_{q_6p^k} \cup F_{q_7p^k} \cup F_{q_8p^k} \cup F_{q_9p^k} \cup F_{q_{10}p^k} \cup F_{q_{11}p^k}
\end{align*}
with $k \in \{0,\ldots, 9\}$ are prisms with decagonal base, as shown in Figure \ref{dodecaedri:fig}, {and form two rings of 10 prisms each. We have $ P'' = D_1 \times D_2$ where $D_1,D_2$ are decagons and therefore
\begin{align*}
\partial P'' & = (D_1 \times \partial D_2) \cup (\partial D_1 \times D_2) \\
& = (F_0'' \cup \cdots \cup F_9'') \cup (F_{10}'' \cup \cdots \cup F_{19}'').\end{align*}}
The actions of $G$ and $H$ descend to $P''$. The order 20 group $G$ acts freely and transitively on the 20 facets of $P''$.

\begin{figure}
 \begin{center}
\centering
\labellist
\small\hair 2pt
\pinlabel $F_1$ at 135 233
\pinlabel $F_{q_1}$ at 140 100
\pinlabel $F_{q_2}$ at 257 150
\pinlabel $F_{q_3}$ at 203 280
\pinlabel $F_{q_4}$ at 70 280
\pinlabel $F_{q_5}$ at 16 150
\pinlabel $F_0''$ at 0 0
\endlabellist
  \includegraphics[width = 6 cm]{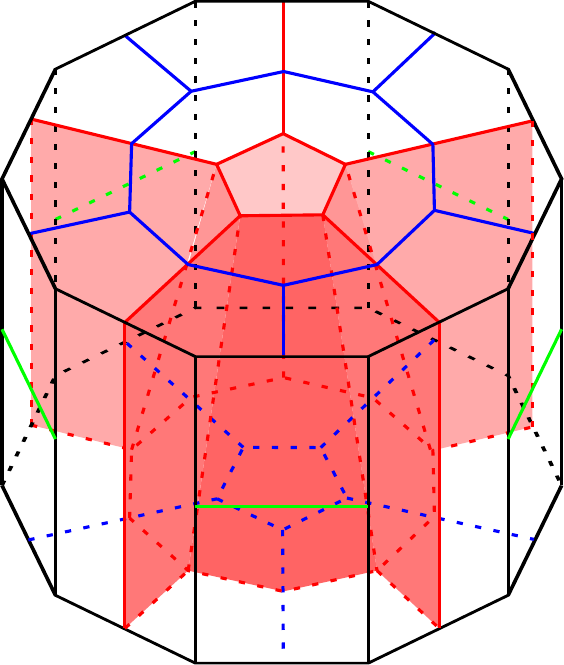}
 \end{center}
 \caption{The 6 dodecahedra $F_1, F_{q_1}, F_{q_2}, F_{q_3}, F_{q_4}, F_{q_5}$ merge to a prism $F_0''$ with decagonal base. Here, $1$ is the identity element in $I_{120}^*$. The 6 dodecahedra are separated by 10 red pentagons, that form the merging locus. The dodecahedron $F_1$ is central, the other five ones are lateral (the picture is symmetric under the obvious order 10 dihedral group action). The green and blue edges are contained in the merging loci of some adjacent prisms.}  \label{dodecaedri:fig}
\end{figure}

\subsubsection{The merging locus}
We determine the merging locus of $P''$.
We have $P'' = D_1 \times D_2$ where $D_1, D_2$ are the two identical decagons. Figure \ref{ramification:fig} shows two two-dimensional subsets $R, B \subset D_1 \times \partial D_2$, coloured in red and blue. 

Each $R$ and $B$ has 5 connected components, and each connected component is contained in a boundary prism $D_1 \times e$ for some edge $e \subset \partial D_2$. Consider $D_1 \times e$ as a $D_1$-bundle over $e$. Each connected component is a subbundle whose fiber is a circle with 5 prongs. The subbundle is of course trivial since the edge is contractible, but the circle shrinks as the point moves conterclockwise on the base edge as shown in Figure \ref{ramification:fig}.

Let $s$ be the symmetry $s(x,y) = (y,x)$ of $P'' = D_1 \times D_2$.

\begin{figure}
 \begin{center}
\centering
\labellist
\small\hair 2pt
\pinlabel $D_1$ at 20 770
\pinlabel $D_2$ at 265 770
\pinlabel $D_1$ at 515 770
\pinlabel $D_2$ at 760 770
\pinlabel $D_1$ at 20 515
\pinlabel $D_2$ at 265 515
\pinlabel $D_1$ at 515 515
\pinlabel $D_2$ at 760 515
\pinlabel $D_1$ at 20 255
\pinlabel $D_2$ at 265 255
\pinlabel $D_1$ at 515 255
\pinlabel $D_2$ at 760 255
\pinlabel $D_1$ at 20 0
\pinlabel $D_2$ at 265 0
\pinlabel $D_1$ at 515 0
\pinlabel $D_2$ at 760 0
\endlabellist
  \includegraphics[width = 12 cm]{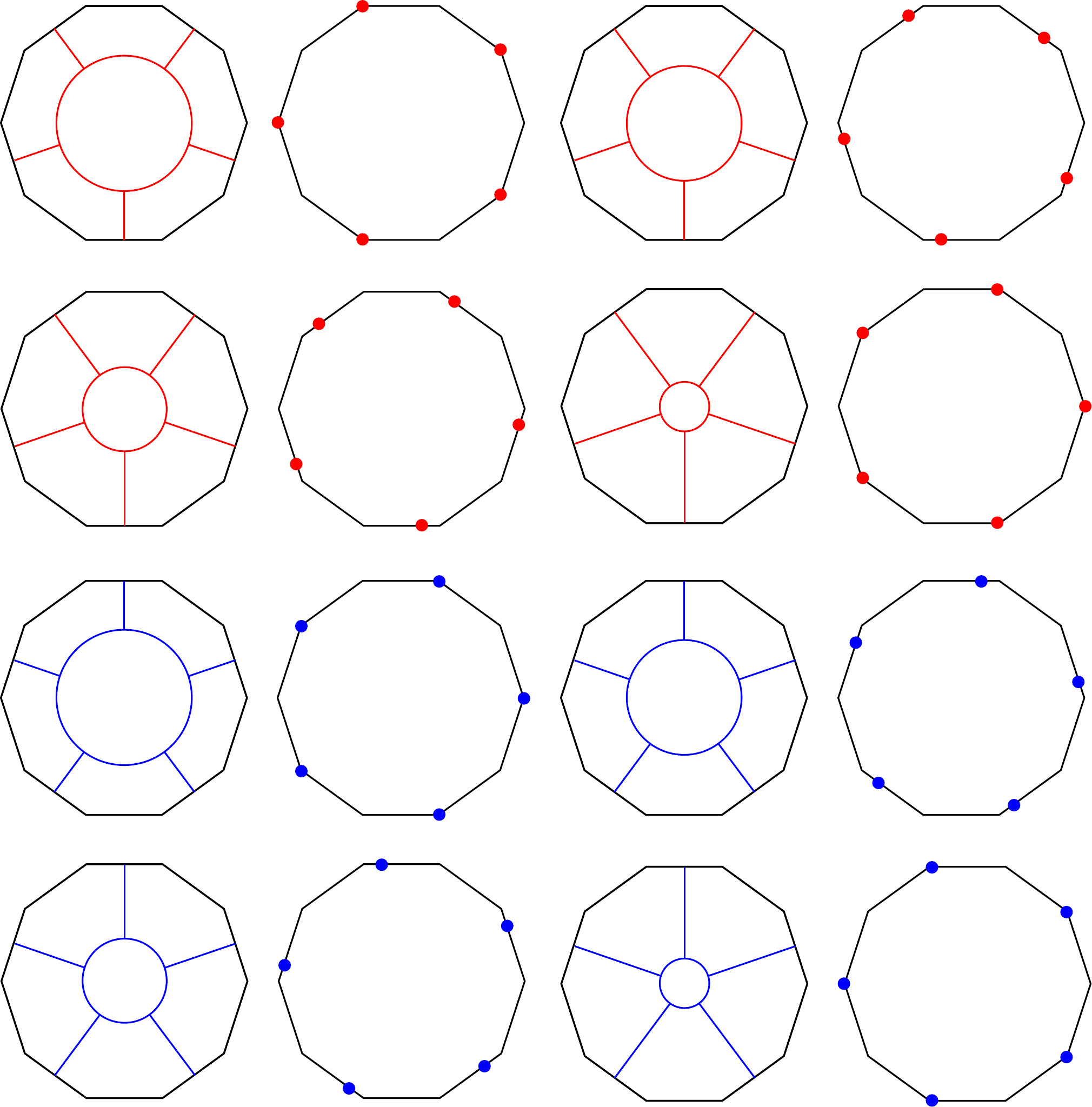}
 \end{center}
 \caption{The red and blue two-dimensional subsets $R, B \subset D_1 \cup \partial D_2$ have each 5 connected components. Each component is a subbundle of the product bundle $D_1 \times e$ over an edge $e$ of $D_2$, with fiber a circle with 5 prongs. The circle shrinks as the base point moves counterclockwise.}  \label{ramification:fig}
\end{figure}

\begin{prop}
The merging locus of $P'' = D_1 \times D_2$ is the subset $R \cup B \cup s(R) \cup s(B)$.       
\end{prop}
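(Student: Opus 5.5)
Since the merging locus is by definition the image under $g_*$ of the ridges of $P$ that separate two dodecahedra belonging to the same block of the partition, I will identify these ridges combinatorially and then locate them inside $\partial P'' = D_1\times D_2$. A pentagonal ridge $F_a\cap F_b$ of the 120-cell corresponds to an edge $\{a,b\}$ of $K$, i.e.\ $\langle a,b\rangle=\phi/2$. The merging locus is thus the image of the pentagons $F_a\cap F_b$ with $a,b$ adjacent and in the same block.

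First I will determine these adjacencies. Take the block $\{1,q_1,\dots,q_5\}$, whose union is $F_0''$. By construction each $q_i$ is adjacent to $1$. Remark~\ref{nonadj:remark} shows that $q_i,q_{i+2}$ are not adjacent, and a direct inner product computation will show that consecutive $q_i,q_{i+1}$ are not adjacent either; for example, $\langle q_1,q_2\rangle = (\phi^2+1-\phi^{-2})/4=(1+\phi)/4\neq\phi/2$. So the merging locus inside $F_0''$ consists of exactly the 5 pentagons $F_1\cap F_{q_i}$, which are the red pentagons of Figure~\ref{dodecaedri:fig}. The central dodecahedron $F_1$ meets the lateral ones in a ring of pentagons around its equator. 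In the prism $F_0''=D_1\times e$ these five pentagons form a circle with five prongs: the circle is the equator of $F_1$ and the prongs are the pentagons themselves. The one caveat is that the image of the 5 pentagons must be seen as a 2-dimensional set, so each pentagon contributes a region, and the union is a subbundle over $e$ with 5-pronged fiber. The five pentagons are permuted by $r$, which fixes $F_1$ and is a rotation of order 5.

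Next I will use the symmetries. The group $G$ acts transitively on the 20 blocks and the partition is $G$-invariant, so the merging locus is the $G$-orbit of the piece found in $F_0''$. Right multiplication by $p^2$ rotates each decagon ring by two steps, so its orbit gives 5 of the 10 prisms over edges of $\partial D_2$. The piece in $F_1''$ is related to that in $F_0''$ by right multiplication by $p$, which is not in $G$. Composing with left multiplication by $k$, or using the symmetry $F_{q_i p^k}\leftrightarrow$ the analogous dodecahedra in the other parity, the same analysis applies verbatim to odd $k$. The resulting 10 pieces in $D_1\times\partial D_2$ then split into two families of 5, namely $R$ (even $k$) and $B$ (odd $k$). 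These families differ in how the central dodecahedron $F_{p^k}$ sits relative to its neighbours, and this produces the two drawings in Figure~\ref{ramification:fig}. Left multiplication by $k$ swaps the two rings $T_0\cup T_6$ and induces the swap $s$ on $D_1\times D_2$ (up to a rotation), so the pieces in $\partial D_1\times D_2$ are $s(R)\cup s(B)$.

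The main obstacle is the geometric identification step: showing that under the PL homeomorphism $g_*$ the equatorial ring of pentagons of $F_{p^k}$ really appears as a circle with five prongs that shrinks counterclockwise along the base edge $e$. This requires tracking which faces of $F_{p^k}$ glue to $F_{p^{k\pm1}}$, which form the decagonal top and bottom, and which border the adjacent prisms. I would handle it concretely by computing the neighbours of $1$ among the $q_jp^{\pm1}$ and $p^{\pm1}$, reading off the combinatorics of Figure~\ref{dodecaedri:fig}, and checking that the pentagons $F_{p^k}\cap F_{q_ip^k}$ avoid the top and bottom decagons and spiral as drawn.
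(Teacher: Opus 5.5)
There is a genuine gap, and it is at your first computational step. Your inner product is miscomputed. Since $\phi^2-\phi^{-2}=(\phi-\phi^{-1})(\phi+\phi^{-1})=\sqrt5$, you get $\langle q_1,q_2\rangle=\tfrac14(1+\sqrt5)=\phi/2$, so $q_1$ and $q_2$ \emph{are} adjacent. Likewise $\langle q_i,q_{i+1}\rangle=\tfrac14(\phi^2+\phi^{-1})=\phi/2$ for the other consecutive pairs, cyclically. This is forced anyway: $\{q_1,\dots,q_5\}$ is the link of the edge $\{1,p\}$ of the 600-cell, which is a pentagon, and Remark~\ref{nonadj:remark} only excludes \emph{non-successive} pairs. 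Hence the merging locus in $F_0''$ consists of ten pentagons, as the caption of Figure~\ref{dodecaedri:fig} says.
\begin{itemize}
\item The five $F_1\cap F_{q_i}$ form the annulus of faces of $F_1$ surrounding $F_1\cap F_p$ (not an equator), and they sweep out the \emph{circle} of the fibre.
\item The five $F_{q_i}\cap F_{q_{i+1}}$ separate consecutive lateral dodecahedra, and they sweep out the five \emph{prongs} running from the circle to $\partial D_1$.
\end{itemize}
With only the five pentagons you kept, each slice $D_1\times\{t\}$ would meet the locus in a bare circle, so your argument would identify a different and wrong locus. The sentence ``the prongs are the pentagons themselves'' does not describe any actual structure; it just papers over the missing pentagons.

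The step you postpone as the ``main obstacle'' is exactly what the paper reads off Figure~\ref{dodecaedri:fig}. As it stands, your proposal does not establish the shape of the fibre either. It can be done with your method, as follows.
\begin{itemize}
\item The twelve neighbours of $1$ are $p$, $q_1,\dots,q_5$, $p^{-1}$ and $q_1p^{-1},\dots,q_5p^{-1}$. So $F_1$ never reaches the lateral boundary of the prism. It meets the base decagon shared with $F_1''$ only in the single pentagon $F_1\cap F_p$, but meets the base shared with $F_9''$ in six pentagons. This is what makes the circle shrink along $e$.
\item One checks that $q_i$ is adjacent to $q_ip$ and to $q_{i+1}p$: compute this for one $i$ and propagate with $r$, using $pq_j=q_{j+1}p$. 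This makes the prongs coming from the two adjacent prisms alternate around each decagonal face. The two graphs therefore meet in five points, where the prongs of the small circle cross the large one, rather than along segments. This relative position is needed to match the drawn $R$ and $B$.
\end{itemize}
A minor point: right multiplication by $p$ \emph{does} lie in $G$, because right multiplication by $p^5=-1$ equals left multiplication by $k^2$. So $G$ alone carries the piece in $F_0''$ to all twenty prisms, and odd $k$ needs no separate treatment.
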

\begin{proof}
Figure \ref{dodecaedri:fig} shows that the merging locus on every prism $D_1 \times e$ projects onto $e$ as a fiber bundle with fiber a circle with five prongs, with the circle shrinking as the point in $e$ moves upwards.
\end{proof}

The merging locus intersects each dodecagonal face $D_1 \times \{v\}$, where $v$ is a vertex of $D_2$, into a graph as in Figure \ref{ramification_points:fig}. This shows that $R\cup B$ is connected, and $R\cap B$ consists of 50 points, 5 inside each dodecagonal face $D_1 \times \{v\}$. The merging locus has two connected components $R\cup B$ and $s(R) \cup s(B)$.

\begin{figure}
 \begin{center}
\centering
\labellist
\small\hair 2pt
\pinlabel $D_1$ at 20 0
\pinlabel $D_2$ at 265 0
\pinlabel $D_1$ at 515 0
\pinlabel $D_2$ at 760 0
\endlabellist
  \includegraphics[width = 12 cm]{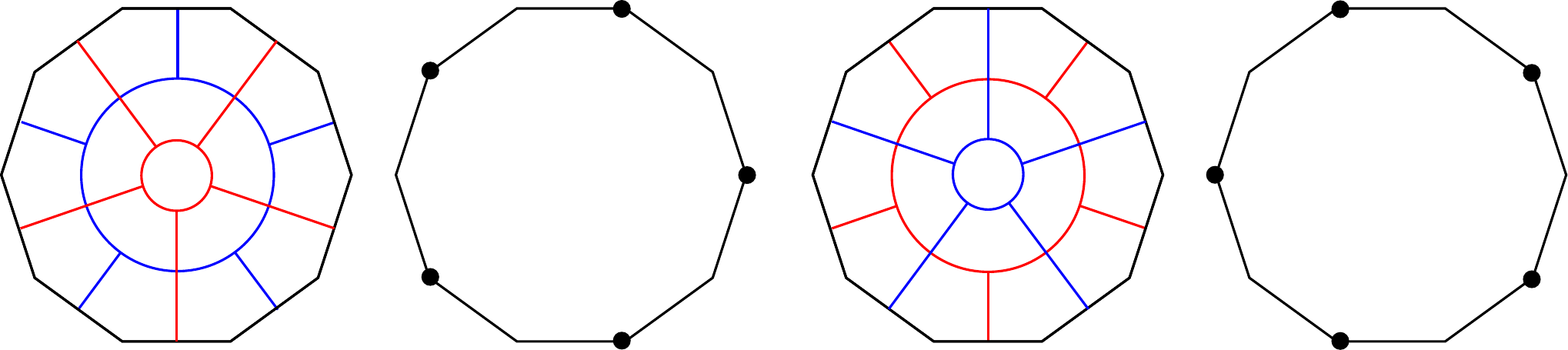}
 \end{center}
 \caption{The intersection of the merging locus with a dodecagonal face $D_1 \times \{v\}$ where $v$ is a vertex of $D_2$.}  \label{ramification_points:fig}
\end{figure}

\subsubsection{The colouring $\lambda''$}
We have discovered that $P''$ is a product of two decagons, and
we now colour its 20 facets in $V'' = (\matZ/2\matZ)^4$ as follows:
\begin{gather*}
\lambda'' (F_k'') = \begin{cases} e_1 {\ \rm if \ } k {\ \rm odd}, \\ e_2 {\ \rm if \ } k {\ \rm even}, \end{cases} \qquad
\lambda'' (F_{10+k}'') = \begin{cases} e_3 {\ \rm if \ } k {\ \rm odd}, \\ e_4 {\ \rm if \ } k {\ \rm even} \end{cases}
\end{gather*}
for $k\in \{0,\ldots, 9\}$.
This 4-colouring is the product of two standard 2-colourings of two decagons as in Figure \ref{decagons:fig}. Each such 2-colourings gives rise to a surface $\Sigma$ of genus 4. Therefore the colouring produces the product manifold $\Sigma\times \Sigma$, tessellated into $2^4=16$ copies of $P''$.

\begin{figure}
 \begin{center}
\centering
\labellist
\small\hair 2pt
\pinlabel $D_1$ at -10 0
\pinlabel $D_2$ at 470 0
\pinlabel $e_1$ at 40 5
\pinlabel $e_2$ at 110 -15
\pinlabel $e_1$ at 180 5
\pinlabel $e_2$ at 190 75
\pinlabel $e_1$ at 190 135
\pinlabel $e_2$ at 180 205
\pinlabel $e_1$ at 110 225
\pinlabel $e_2$ at 40 205
\pinlabel $e_1$ at -10 135
\pinlabel $e_2$ at -10 75
\pinlabel $e_3$ at 290 5
\pinlabel $e_4$ at 360 -15
\pinlabel $e_3$ at 430 5
\pinlabel $e_4$ at 480 75
\pinlabel $e_3$ at 480 135
\pinlabel $e_4$ at 430 205
\pinlabel $e_3$ at 360 225
\pinlabel $e_4$ at 290 205
\pinlabel $e_3$ at 280 135
\pinlabel $e_4$ at 280 75
\endlabellist
  \includegraphics[width = 6.5 cm]{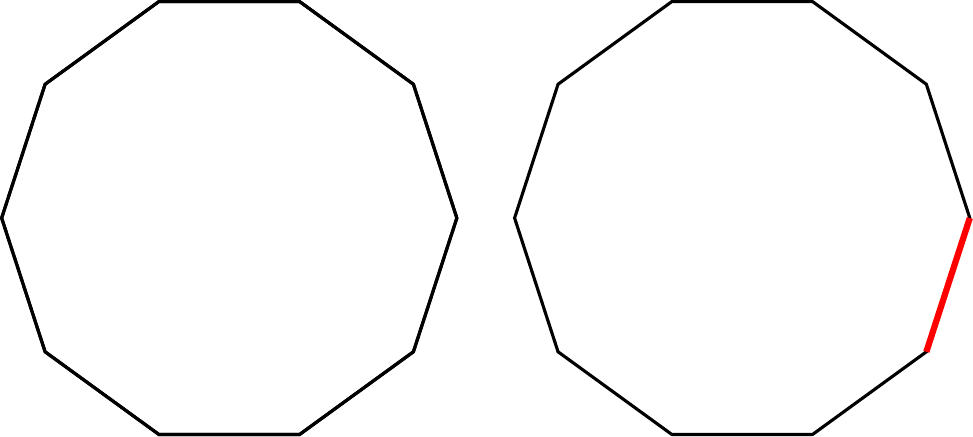}
  \vspace{.2 cm}
 \end{center}
 \caption{The 4-colouring $\lambda''$ on $P''$ is the product of two 2-colourings of two decagons}  \label{decagons:fig}
\end{figure}

The projection $f\colon (\matZ/2\matZ)^7 \to (\matZ/2\matZ)^4$ onto the first four coordinates is a homomorphism such that $\lambda''\circ g = f \circ \lambda$. From the discussion of Section \ref{branched:subsubsection}, we therefore obtain a branched covering $\tilde{M} \to \Sigma \times \Sigma$ of degree $2^{7-4} = 8$.

\subsubsection{The ramification locus in $\Sigma \times \Sigma$}
We determine the ramification locus of the branched cover $\tilde{M} \to \Sigma \times \Sigma$.\footnote{As a convention, throughout the paper `ramification locus' will refer to the subset of the codomain.} We are interested in particular in determining the types of ramification points. 

We will encounter in this paper three types of ramification points. Let $f\colon X \to Y$ be a piecewise-linear map between closed oriented 4-manifolds. We say that a point $x\in X$ is \emph{smooth}, \emph{simple}, \emph{triple}, and \emph{nodal}, if the map $f$ can be read on some oriented charts near $p$ respectively as
\begin{align*}
\matC^2 &\longrightarrow \matC^2, \qquad (z,w) \mapsto (z, w); \\
\matC^2 &\longrightarrow \matC^2, \qquad (z,w) \mapsto (z^2, w); \\
\matR^3 \times \matR &\longrightarrow \matR^3 / K \times \matR; \\
\matC^2 &\longrightarrow \matC^2, \qquad (z,w) \mapsto (z^2, w^2).
\end{align*}
In the third line, $K < {\rm SO}(3)$ is the Klein group $K \cong (\matZ/2\matZ)^2$ consisting of diagonal matrices with $\det =1$ and diagonal entries $\pm 1$; note that $\matR^3 / K$ is naturally identified with $\mathbb{R}^3$.
\par
The simple, triple, and nodal points are called \emph{singular}, and they form a two-dimensional space in $X$ which looks locally at a simple, triple, and nodal point like a disc, a product $Y \times I$, and two transverse discs, respectively.

{We say that a point $y \in Y$ is \emph{smooth, simple, triple}, or \emph{nodal}, if all its preimages are. All the branched coverings that we consider here arise from the construction described in Section \ref{branchedcovering} and are therefore regular, hence if one preimage of $y$ is smooth / simple / triple / nodal then all the other preimages of $y$ also are. 
}

\begin{prop}\label{isotropyprop}
    The ramification locus of the branched covering $\tilde{M} \to \Sigma \times \Sigma$ is a 2-dimensional subspace, {which consists of simple, triple, and nodal points only}. 
\end{prop}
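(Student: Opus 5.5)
The strategy is to work locally. The map $h\colon \tilde M\to\Sigma\times\Sigma$ sends each $P_v$ to $P''_{f(v)}$ via $g_*$. So the local model near a point $x\in\tilde M$ is determined by two data: the face of $P$ containing $x$ in its relative interior, and the colours of the facets of $P$ through $x$. Near a point of a $k$-face $F$ of the 120-cell, $\tilde M$ looks like $\matR^k\times\matR^{4-k}$. The $2^{4-k}$ local copies of $P$ are indexed by the subgroup $\langle \lambda(i) : F\subset F_i\rangle$, which has rank $4-k$. The image point lies on a face $F''$ of $P''$, and the local chart of $\Sigma\times\Sigma$ is built from copies of $P''$ indexed by $\langle\lambda''(j) : F''\subset F''_j\rangle$. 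The local degree is then the ratio of the sizes of these two subgroups. More precisely, the local model is the map $\matR^{4-k}\to\matR^{4-k}/\Gamma$, where $\Gamma$ is the group of reflections in the facets of $P$ at $x$ whose colours lie in $\ker f$ modulo the merged structure. Equivalently, $\Gamma$ is generated by the reflections across those codimension-two walls of $P$ that get flattened by the merging. First I will show that only points of the merging locus can be singular: away from $g_*^{-1}$ of the merging locus, $g_*$ is a face-preserving local homeomorphism, so $h$ is a local homeomorphism there. This gives the first assertion: the ramification locus is the preimage of $R\cup B\cup s(R)\cup s(B)$, which is 2-dimensional.

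Next I will classify points of the merging locus by the face of $P$ they come from. The merging locus is a union of red pentagons, i.e.\ ridges separating two merged dodecahedra in the same group of six, and their edges and vertices. I will treat three cases.
\begin{enumerate}
\item \emph{Interior of a merging pentagon $F_a\cap F_b$ with $g(a)=g(b)$.} Locally $P$ is a quadrant $\matR^2\times\matR_{\ge0}^2$ and $P''$ is a half-space. By Remark~\ref{nonadj:remark} and the explicit formulas, $\lambda(a)\ne\lambda(b)$ but $f\lambda(a)=f\lambda(b)$. So the two-fold fold across the ridge becomes the map $(z,w)\mapsto(z^2,w)$: the point is simple.
\item \emph{Edge in the merging locus.} Either three of the facets around the edge lie in one group of six, or two pairs come from two different groups; Figure~\ref{dodecaedri:fig} shows the green and blue edges of the second kind. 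In the first subcase (three dodecahedra of the same prism meeting at an edge, visible as the prongs of the circle with five prongs) all three reflections collapse. The local picture is $\matR\times\matR^3_{\ge0}\to\matR\times(\text{half-space})$, giving $\matR^3\to\matR^3/K$ times $\matR$: a triple point. In the second subcase two transverse pentagons each merge one pair, which gives $(z^2,w^2)$: a nodal point. This matches the $50$ points of $R\cap B$, and the crossings of $R\cup B$ with $s(R)\cup s(B)$, read off Figure~\ref{ramification_points:fig}.
\item \emph{Vertices of $P$ on the merging locus.} These are handled the same way, using Exercise~\ref{vertex:exercise}: each vertex lies in three rings, so its four facets distribute among the groups of six in patterns that I enumerate up to the symmetry groups $G$ and $H$, which preserve the partition.
\end{enumerate}

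The main obstacle is checking in each configuration that the colours under $f$ produce exactly these quotients. The issue is whether the merged colours coincide in the needed way: for a triple point, the three facets must have colours differing by vectors in $\ker f=\langle e_5,e_6,e_7\rangle$ that are independent. I plan to verify this using the explicit $\lambda$ on one representative of each $G$-orbit. Since $G$ acts transitively on the prisms and $\lambda$ is $G$-symmetric, it suffices to inspect the six dodecahedra of $F''_0$ and their neighbours. In particular, I must rule out a fourth type of point, namely a vertex where all four facets are merged into one group, and check that vertices never give worse isotropy than $K\times\matZ/2$ acting as $(z^2,w^2)$ or the triple type.
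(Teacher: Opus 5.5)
Your local framework is the right one, and more systematic than the paper's proof, which reads the three types off the pictures of the merging locus: nodes at $R\cap B$ and $s(R)\cap s(B)$, triple points along the valence-$3$ vertices of the circles with five prongs, simple points elsewhere. However, your case analysis at edges is wrong.

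\textbf{Edges.} An edge of the 120-cell lies in exactly three facets, so ``two pairs come from two different groups'' cannot occur there. The actual possibilities are these.
\begin{itemize}
\item All three facets lie in one group. This is your triple case.
\item Exactly two facets lie in one group. You omit this case. It gives simple points, because the merged pentagon continues by reflection across the third facet. It occurs wherever a red pentagon meets the boundary of its prism, e.g.\ along the green and blue edges of Figure~\ref{dodecaedri:fig}.
\end{itemize}
Nodal points cannot lie in the interior of an edge at all. If two of the three ridges through an edge are merged, then all three facets lie in one group, which is the triple case. Two transverse merged ridges from different groups can only meet at a vertex of $P$. Your own consistency check exposes this: interior points of edges sweep out curves, whereas $R\cap B$ consists of $50$ isolated points. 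Also, $R\cup B$ and $s(R)\cup s(B)$ are the two \emph{disjoint} components of the merging locus, so there are no crossings between them; the second family of nodes is $s(R)\cap s(B)$.

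\textbf{Vertices.} This case, which you only announce, is exactly where the nodes live, so it must actually be carried out. Sort vertices by the partition of their four facets under $g$.
\begin{itemize}
\item Type $(3,1)$ gives $\matR^3/K\times\matR$. These are the endpoints of the triple edges on the prism bases.
\item Type $(2,2)$ gives $(z,w)\mapsto(z^2,w^2)$.
\item Type $(2,1,1)$ is simple.
\item Type $(1,1,1,1)$ is smooth.
\item Type $(4)$ would give a local group $(\matZ/2\matZ)^3$ of order $8$ and must be excluded.
\end{itemize}
The exclusion is one line from Exercise~\ref{vertex:exercise}. Each group $\{q_0p^k,\dots,q_5p^k\}$ or $\{q_6p^k,\dots,q_{11}p^k\}$ contains exactly one dodecahedron from each of six rings, while the four facets at a vertex lie in only three rings. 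Note also that the nodal local group is $(\matZ/2\matZ)^2$, acting by $(z,w)\mapsto(\pm z,\pm w)$. The order-$8$ group you mention is precisely the excluded type-$(4)$ group, not the nodal one.

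\textbf{The anticipated obstacle.} The ``main obstacle'' you anticipate is not one. Since $\lambda$ and $\lambda''$ are colourings with $\lambda''\circ g=f\circ\lambda$, the local deck group at $x$ is $\ker f\cap\langle\lambda(i) : x\in F_i\rangle$. It is generated by the sums $\lambda(a)+\lambda(b)$ over merged pairs, acting by $\pi$-rotations about the merged ridges rather than by facet reflections. So the local model depends only on the partition type, and no colour checks on $G$-orbit representatives are needed. With these corrections your argument proves the proposition without appealing to the figures.
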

\begin{proof}
    The ramification locus is the union of all the merging loci of all the products $D_1 \times D_2$ in the decomposition. The points in $R\cap B$ and $s(R) \cap s(B)$ give rise to nodes. The valence 3 vertices of the circles with 5 prongs form some closed curves entirely made of triple points. All the rest of the ramification locus consists of simple points.
\end{proof}

\subsection{Our main example $M$} We have constructed a branched covering $\tilde{M} \to \Sigma \times \Sigma$ of degree 8 with a quite complicated ramification locus. We now write this branched covering as a composition
$$\tilde{M} \stackrel {\tilde p } \longrightarrow M \stackrel p \longrightarrow \Sigma \times \Sigma$$
of two branched coverings of degree 2 and 4 respectively, both with simpler ramification locus. We start by introducing $M$.

The covering $\tilde{M} \to \Sigma \times \Sigma$ was derived from a merging map $P \to P''$ from the 120-cell $P$ to a product of two decagons $P''$. We now write this merging map as a composition of two merging maps
$$P \longrightarrow P' \longrightarrow P''$$
where $P'$ is some intermediate polyhedron that we now introduce.

\subsubsection{The intermediate polyhedron $P'$}
Recall that the vertices of $K$ form the binary icosahedral group $I_{120}^*$, and
every element in $I_{120}^*$ can be written uniquely as 
$q_ip^k$ for some $i\in \{0,\ldots, 11\}$ and $k\in\{0,\ldots, 9\}$. We now consider the following partition of $I_{120}^*$ into 60 subsets of order 3, 2, and 1: 
\begin{gather*}
\{ q_0p^k, q_1 p^k, q_4 p^k \}, \qquad \{q_2p^k, q_3 p^k\}, \qquad \{q_5p^k\}, \\
\{ q_6p^k, q_7 p^k, q_{10} p^k \}, \qquad \{q_8p^k, q_9 p^k\}, \qquad \{q_{11}p^k\}
\end{gather*}
as $k \in \{0, \ldots, 9\}$ varies.
This partition defines a new abstract simplicial complex $K'$ with $60$ vertices onto which $K$ surjects. We can verify that the collapsibility condition stated in Section \ref{merging:subsubsection} is verified, hence $|K'|$ is still a 3-sphere, and we get a new abstract polyhedron $P'$ with 60 facets, obtained by merging some facets of the 120-cell $P$. The facets of $P'$ are

\begin{align*}
F'_k  & = F_{q_0p^k} \cup F_{q_1 p^k} \cup F_{q_4 p^k}, \ & F_{10+k}' & = F_{q_2p^k} \cup F_{q_3 p^k}, \ & F_{20+k}' & = F_{q_5p^k}, \\
F_{30+k}' & =F_{q_6p^k} \cup F_{q_7 p^k} \cup F_{q_{10} p^k}, \ & F_{40+k}' & = F_{q_8p^k} \cup F_{q_9 p^k}, \ & F_{50+k}' & = F_{q_{11}p^k},
\end{align*}

for $k\in \{0,\ldots,9\}$.
The partition that defines $K''$ is actually a refinement of this one: hence we get surjective simplicial maps $K \to K' \to K''$ and also two merging homeomorphisms $P \to P' \to P''$.

\begin{figure}
 \begin{center}
\centering
\labellist
\small\hair 2pt
\pinlabel $F_{0}'$ at 140 100
\pinlabel $F_{10}'$ at 257 150
\pinlabel $F_{20}'$ at 14 150
\pinlabel $F_0''$ at 0 0
\endlabellist
  \includegraphics[width = 6 cm]{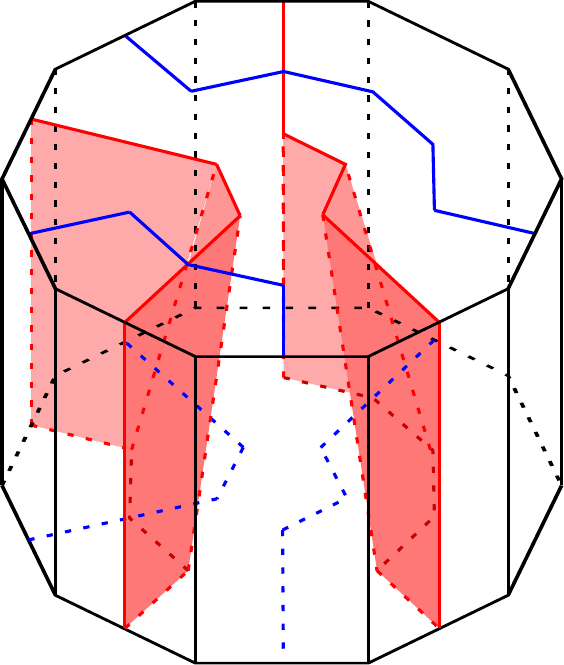}
 \end{center}
 \caption{The facets $F_0', F_{10}', F_{20}'$ are obtained by merging respectively 3, 2, and 1 dodecahedra of the 120-cell. The {three facets $F_0', F_{10}', F_{20}'$} then merge to form the prism $F_0''$ already shown in Figure \ref{dodecaedri:fig}. The faces separating the three facets $F_0', F_{10}', F_{20}'$ are drawn in red. The picture also shows, in blue, the edges of the analogous separating faces in the adjacent prisms $F_1''$ and $F_9''$ {that intersect $F_0''$ in the two base decagons}.
 }  \label{dodecaedri2:fig}
\end{figure}

{The polyhedron $P'$ has 300 vertices, 600 edges, 360 faces, and 60 facets. Therefore its orbifold Euler characteristic is
$$\chi(P') = \frac{300}{2^4} - \frac{600}{2^3} + \frac{360}{2^2} - \frac{60}2 + 1 = \frac{19}4.$$
}

\subsubsection{The colouring $\lambda'$}
We define a colouring $\lambda'$ of $P'$ in $(\matZ/2\matZ)^6$ as follows:

\begin{align*}
\lambda' (F_k') & = \begin{cases} e_1+e_5 {\ \rm if \ } k {\ \rm odd}, \\ e_2+e_6 {\ \rm if \ } k {\ \rm even}, \end{cases} \quad
& \lambda' (F_{10+k}') & = \lambda'(F_{20+k})= \begin{cases} e_1 {\ \rm if \ } k {\ \rm odd}, \\ e_2 {\ \rm if \ } k {\ \rm even}, \end{cases}  \\
\lambda' (F_{30+k}') & = \begin{cases} e_3+e_5 {\ \rm if \ } k {\ \rm odd}, \\ e_4+e_6 {\ \rm if \ } k {\ \rm even}, \end{cases} \quad
& \lambda' (F_{40+k}') & = \lambda'(F_{50+k}') = \begin{cases} e_3 {\ \rm if \ } k {\ \rm odd}, \\ e_4 {\ \rm if \ } k {\ \rm even} \end{cases}
\end{align*}
for $k\in \{0,\ldots,9\}$. This is indeed a colouring: adjacent vertices have distinct colours, and Exercise \ref{vertex:exercise} implies easily that any four different vectors are independent.
Let $M$ be the resulting 4-manifold. It is tessellated into $2^6$ copies of the polyhedron $P'$ {and therefore
$$\chi(M) = 2^6\cdot \chi(P') = 2^6 \times\frac{19}4 = 304.$$
The projections on the first factors
$$(\matZ/2\matZ)^7 \stackrel{f_1}\longrightarrow (\matZ/2\matZ)^6 \stackrel{f_2}\longrightarrow (\matZ/2\matZ)^4$$
are such that $f=f_2\circ f_1$ and moreover
$$\lambda'' \circ g_2 = f_2 \circ \lambda', \quad \lambda' \circ g_1 = f_1 \circ \lambda$$
where $g_2\colon K'' \to K', g_1 \colon K' \to K$ are the projections, and $g=g_1\circ g_2$.
As promised, we have decomposed our branched covering $\tilde{M} \to \Sigma \times \Sigma$ of order 8 into two branched coverings 

$$\tilde{M} \stackrel {\tilde p} \longrightarrow M \stackrel p \longrightarrow \Sigma \times \Sigma$$
of degree 2 and 4 respectively.

\subsection{The ramification locus}
We determine the ramification locus of both branched coverings $\tilde{M} \to M$ and $M \to \Sigma \times \Sigma$.
\subsubsection{The covering $M \to \Sigma \times \Sigma$}
A \emph{product torus} in $\Sigma \times \Sigma$ is a torus of type $\alpha \times \beta$ where $\alpha, \beta$ are simple closed curves in $\Sigma$.

\begin{prop}\label{80tori}
    The ramification locus of the four-fold branched cover $p\colon M \to \Sigma \times \Sigma$ {consists of simple and nodal points}. It is the union of 80 homologically non-trivial product tori that intersect transversely in 160 nodal points. Each torus intersect four other tori at a single point, and we can orient the tori so that the intersections are all positive for the product orientation on $\Sigma\times \Sigma$.
\end{prop}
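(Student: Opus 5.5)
The plan is to analyse the second merging map $P' \to P''$ one facet of $P''$ at a time, and then propagate the picture through the colouring $\lambda''$ to $\Sigma \times \Sigma$. By the discussion of Section~\ref{branched:subsubsection}, the ramification locus of $p$ is the preimage in $\Sigma\times\Sigma$ of the merging locus of $P'\to P''$. That merging locus is the union of the faces of $P'$ separating $F'_k, F'_{10+k}, F'_{20+k}$ inside the prism $F''_k$, and the analogous faces inside $F''_{10+k}$. So the first step is a local computation in the single prism $F''_0 = D_1 \times e$, using Figure~\ref{dodecaedri2:fig}.

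\emph{Step 1: the local picture in one prism.} The three groups are $\{1,q_1,q_4\}$, $\{q_2,q_3\}$ and $\{q_5\}$. The central dodecahedron $F_1$ touches all five lateral ones, and the lateral ones touch cyclically. Hence, among the pentagons forming the merging locus of Figure~\ref{dodecaedri:fig}, the ones that survive are exactly those separating distinct groups. In each fibre $D_1\times\{t\}$, the five-pronged circle of the previous merging locus loses the arcs interior to a group. I expect what remains to be two disjoint properly embedded arcs $a_t, b_t \subset D_1$, each joining two points of $\partial D_1$. This is the step to verify carefully from the adjacency data. Like the circle, these arcs move with $t$, but they stay in a fixed isotopy class. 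So the merging locus in $F''_0$ is a union of two strips $\cong I\times e$, and there are no triple points.

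Next I would check how the strips in $F''_k$ and $F''_{k\pm1}$ meet along the base decagons $D_1\times\{v\}$. The blue edges in Figure~\ref{dodecaedri2:fig} show the arcs of the adjacent prisms. Using the $G$-symmetry, it suffices to examine one vertex $v$, and I would check that the endpoints of the arcs match up.

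\emph{Step 2: globalise in $\Sigma\times\Sigma$.} The colouring $\lambda''$ is a product, so $\Sigma\times\Sigma$ is tessellated by $16$ copies of $D_1\times D_2$. The preimage of $D_1\times\partial D_2$ is $\Sigma \times \Gamma$, where $\Gamma\subset\Sigma$ is the preimage of $\partial D_2$, a $4$-valent graph. Under reflection across the edges of $D_1$, the arcs $a_t,b_t$ in the copies of $D_1$ close up into simple closed curves $\alpha\subset\Sigma$. Under the matching from Step~1, the strips in consecutive prisms close up along curves $\beta\subset\Gamma$ that pass straight through the $4$-valent vertices. This yields product tori $\alpha\times\beta$, and symmetrically tori $\beta'\times\alpha'$ coming from $\partial D_1\times D_2$. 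I then count them: a strip pattern in each of the $20$ facet types, times the $16$ tiles, divided by the number of strips each torus contains. I expect $40$ tori of each type, $80$ in total.

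\emph{Step 3: intersections, types, and signs.} Two tori of the same type are disjoint, because the strips are disjoint. A torus $\alpha\times\beta$ meets $\beta'\times\alpha'$ only at points of $(\alpha\cap\beta')\times(\beta\cap\alpha')$, which lie over corners $\partial D_1\times\partial D_2$. The tangent planes there are complementary, so the intersection is transverse. In the local model the branching along a sheet is $z\mapsto z^2$, and at a crossing it is $(z,w)\mapsto(z^2,w^2)$. This gives simple points along the tori and nodal points at the crossings, consistent with Proposition~\ref{isotropyprop} now that the triple curves are gone.

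The intersection count uses $(\alpha\times\beta)\cdot(\gamma\times\delta)=\pm(\alpha\cdot\gamma)(\beta\cdot\delta)$, where the sign depends only on the dimensions. So it suffices to show two things. First, each $\alpha$ meets exactly two of the curves $\beta'$, once each, and each $\beta$ meets exactly two of the curves $\alpha'$. Second, the curves in each factor can be oriented so that all these intersections have the same sign. For the orientations, I would orient all the $\alpha$'s coherently with one cyclic direction around $D_1$, and all the $\beta$'s with one direction along $\partial D_2$. This should give four intersection points per torus and $80\cdot4/2=160$ nodes. Homological nontriviality then follows for free, since each torus has intersection number $1$ with another torus.

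The main obstacle is Steps~1--2: certifying from the combinatorics of the 120-cell that the surviving arcs are exactly two chords per fibre, and that they glue into embedded closed curves with the claimed pairwise intersection numbers.
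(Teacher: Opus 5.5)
Your Step 1 is correct. Discarding the pentagons $F_1\cap F_{q_1}$, $F_1\cap F_{q_4}$, $F_{q_2}\cap F_{q_3}$ from the five-pronged circle leaves two disjoint chords in each fibre, one cutting off $F_{q_2}\cup F_{q_3}$ and one cutting off $F_{q_5}$, so there are no triple points. The gap is in Step 2: the chords of $F''_k$ and $F''_{k+1}$ do \emph{not} match up on their common base decagon $D_1\times\{v\}$. One prism reaches that decagon at the end where its five-pronged circle is large (the boundary of the six-pentagon cap of its central dodecahedron). The other reaches it at the end where its circle is small (the boundary of a single pentagon inside that cap). The two pairs of chords are distinct curves that cross transversally. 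No gluing across prisms is needed either. At a vertex of $\Gamma$ over $v$ the four half-edges alternate between lifts of $e_k$ and of $e_{k+1}$, so going straight through keeps you on lifts of $e_k$. Hence the preimage of the single edge $e_k$ is already a closed curve $\tilde\beta$ made of two copies of $e_k$. Your description of strips of consecutive prisms closing up along curves that pass straight through the vertices is therefore self-contradictory. This is the paper's picture: in each prism the locus is, up to isotopy, $\alpha\times\beta\cup\alpha'\times\beta$ with $\beta$ the edge. Each chord lifts to two parallel curves and $\beta$ to one, so each prism by itself gives four tori, each made of four copies of one square, and there are $20\cdot 4=80$ in total.

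Consequently Step 3 has the intersection pattern backwards. Tori from $D_1\times\partial D_2$ and from $\partial D_1\times D_2$ are disjoint: already for $P\to P''$ the merging locus has the two components $R\cup B$ and $s(R)\cup s(B)$, and the present locus is contained in it. So there are no nodes over the corners, and your claim that two tori of the same type are disjoint is false. The nodes are the transverse crossings, inside each of the $20$ base decagons, between the chords of the two adjacent prisms. There are exactly two per decagon: the two chords cutting off the $\{q_2,q_3\}$-groups cross once, and so do the two cutting off the $q_5$-groups. Each crossing lifts to $4$ points, giving $20\cdot 2\cdot 4=160$. The chords of $F''_k$ and $F''_{k\pm1}$ differ by a one-step rotation of $D_1$ (induced by right multiplication by $p$), so they end on edges of different colours. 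Hence each lift of a chord of $F''_k$ meets each lift of the corresponding chord of $F''_{k\pm1}$ exactly once, and $\tilde\beta_k\cap\tilde\beta_{k\pm1}$ is one point. So a torus from $F''_k$ meets two tori from $F''_{k-1}$ and two from $F''_{k+1}$, which are mutually disjoint. The sign analysis must be done for these pairs, using $(\tilde\alpha^{(k)}\times\tilde\beta_k)\cdot(\tilde\alpha^{(k+1)}\times\tilde\beta_{k+1})=-(\tilde\alpha^{(k)}\cdot\tilde\alpha^{(k+1)})(\tilde\beta_k\cdot\tilde\beta_{k+1})$. The paper orients the arcs so that the two factors always have equal sign, which makes every intersection negative, and then mirrors $D_2$. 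Note also that a closed lift $\tilde\beta$ runs along $e_k$ once in each direction, so orienting the $\beta$'s along $\partial D_2$ does not by itself fix these signs.
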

{Crucially, there is no triple point.} As a sanity check, {we can recompute $\chi(M)$ using the branched covering, and we still get}
    $$\chi(M)=4\chi(\Sigma \times \Sigma) + 160 = 304.$$

\begin{proof}
    It follows from Figure \ref{dodecaedri2:fig} that the ramification locus intersects each prism into two squares, that are products $\alpha \times \beta$ and $\alpha' \times \beta$ of the arcs $\alpha,\alpha',\beta$ shown in Figure \ref{tori:fig}-(left). 
   {The two squares $\alpha\times \beta$ and $\alpha' \times \beta$ lift to $2+2=4$ tori $\tilde \alpha_i \times \beta$ and $\tilde \alpha_i' \times \beta$, in $\Sigma \times \Sigma$, that are shown in Figure \ref{surfaces:fig}.} Since $P''$ has 20 prism facets, we get $20 \times 2 \times 2 = 80$ tori overall. {The other tori are obtained from those shown in Figure \ref{surfaces:fig} by acting with rotations of order 10 and exchanging the two factors in $\Sigma \times \Sigma$.}

    Each torus intersects transversely 4 other tori, produced by squares in nearby prisms, as explained in Figure \ref{tori:fig}-(right). 
    {We orient the arcs as indicated in Figure \ref{tori:fig}-(right), and orient the curves correspondingly as shown in Figure \ref{surfaces:fig}. The tori inherit an orientation as products of oriented curves.}

    The transverse intersection between any pair of tori is {negative} 
    because the intersections of a red and a blue arc in the two decagons in Figure \ref{tori:fig}-(right) have {equal} signs ({equal signs yield negative intersections,} because if $(v_1,v_2)$ are tangent to the red arcs and $(w_1,w_2)$ are tangent to the blue arcs, we get that $(v_1,w_1,v_2,w_2)$ is positive, and hence $(v_1,v_2,w_1,w_2)$ is negative!) 
    {To transform the negative intersections into positive ones it suffices to modify the identification of the second factor in $\Sigma \times \Sigma$ with $\Sigma$ under an orientation-reversing map, for instance by mirroring the decagon $D_2$.}
\end{proof}

\begin{figure}
 \begin{center}
\centering
\labellist
\small\hair 2pt
\pinlabel $\alpha$ at 75 120
\pinlabel $\alpha'$ at 140 100
\pinlabel $\beta$ at 440 80
\endlabellist
  \includegraphics[width = 12 cm]{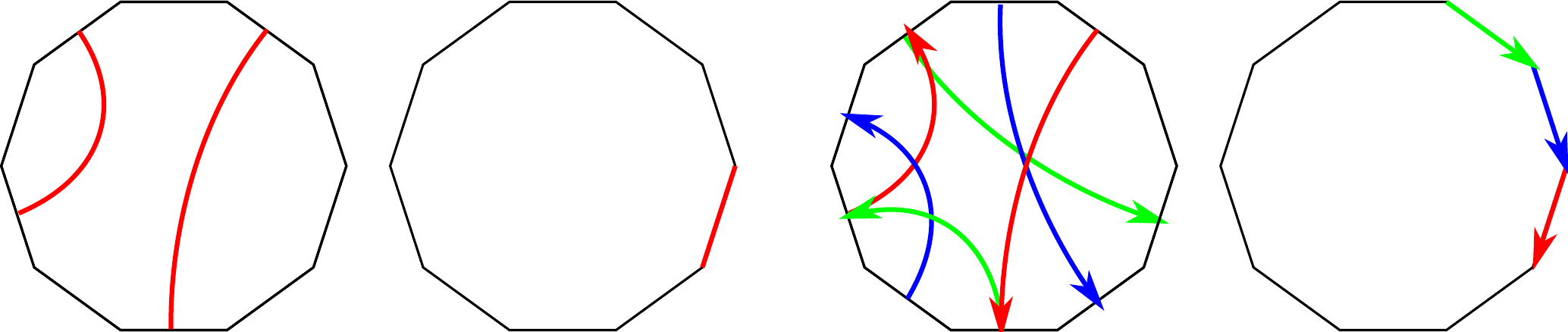}
 \end{center}
 \caption{The ramification locus in each prism consists of two squares $\alpha \times \beta$ and $\alpha' \times \beta$. 
 {Note that each arc $\alpha, \alpha'$ gives rise to two parallel curves $\tilde \alpha_i, \tilde \alpha'_i$ in $\Sigma$ via the colouring process, while $\beta$ gives rise to a single one $\tilde \beta$, see Figure \ref{surfaces:fig}}
 (left). Each blue torus intersects two red tori and two green tori, while the red and green tori are disjoint (right).
 }  \label{tori:fig}
\end{figure}

\begin{figure}
 \begin{center}
\centering
\labellist
\small\hair 2pt
\pinlabel $\tilde\alpha_1$ at 35 20
\pinlabel $\tilde\alpha_2$ at 42 70
\pinlabel $\tilde\alpha'_1$ at 152 70
\pinlabel $\tilde\alpha'_2$ at 140 20
\pinlabel $\tilde\beta$ at 260 60
\endlabellist
  \includegraphics[width = 12 cm]{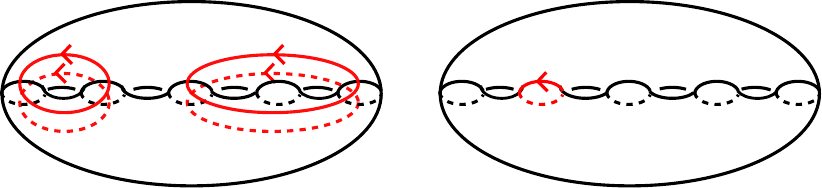}
 \end{center}
 \caption{{The preimages of the squares $\alpha \times \beta$ and $\alpha'\times \beta$ from Figure \ref{tori:fig}-(left) consist of $2+2=4$ product tori $\tilde \alpha_i \times \tilde\beta$ and $\tilde \alpha'_i \times \tilde\beta$ in $\Sigma \times \Sigma$. The curves are oriented as prescribed in Figure \ref{tori:fig}-(right), and the tori inherit an orientation.}
 }  \label{surfaces:fig}
\end{figure}

\subsubsection{A further factorization}
For later purposes, it will also be helpful to record the following.
\begin{prop}\label{twodoublebranch}
The four-fold branched covering of Proposition \ref{80tori} factors as the composition of two double branched coverings 
\begin{equation*}
M\stackrel{p_1}{\longrightarrow} N\stackrel{p_2}{\longrightarrow} \Sigma\times\Sigma
\end{equation*}
each only having simple singularities. In particular, the ramification loci of both $p_1$ and $p_2$ are a collection of disjoint embedded surfaces, and in the case of $p_2$ they are a subset the ramification locus of $p$ consisting of $40$ disjoint tori.
\end{prop}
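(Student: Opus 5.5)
Since both maps in the factorization come from colourings of polyhedra, I would construct an intermediate polyhedron and colouring and then invoke the branched covering construction of Section \ref{branched:subsubsection} twice. The four-fold cover $p$ comes from the merging $P'\to P''$ and the projection $f_2\colon(\matZ/2\matZ)^6\to(\matZ/2\matZ)^4$ onto the first four coordinates, whose kernel $\langle e_5,e_6\rangle$ has order $4$. I would factor $f_2$ as
\begin{equation*}
(\matZ/2\matZ)^6\stackrel{f_a}{\longrightarrow}(\matZ/2\matZ)^5\stackrel{f_b}{\longrightarrow}(\matZ/2\matZ)^4,
\end{equation*}
where $f_a$ kills $e_5$. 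In parallel I would factor the merging $P'\to P''$ through a polyhedron $P'''$. In each prism $F''_k$, obtained from $F'_k,F'_{10+k},F'_{20+k}$, I would merge only the pair that the colouring distinguishes solely by $e_5$. Concretely, $\lambda'(F'_k)=e_1+e_5$ or $e_2+e_6$, and $\lambda'(F'_{10+k})=\lambda'(F'_{20+k})=e_1$ or $e_2$.

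The difficulty is that the $e_5$ and $e_6$ parts alternate with the parity of $k$. So I would take a slightly different route: the first step merges the pair $F'_{10+k}\cup F'_{20+k}$ for all $k$. These two facets carry the same colour, so merging them is forced to be compatible with the colouring. The second step then merges $F'_k$ with the result.

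I would set $P'''$ to have $40$ facets: $F'_k$, $F'_{30+k}$, $F'_{10+k}\cup F'_{20+k}$ and $F'_{40+k}\cup F'_{50+k}$. I would colour it by $\lambda'$ itself in $(\matZ/2\matZ)^6$, taking $f$ to be the identity. Then $p_1$ would have degree $1$, which is useless, so the real factorization has to come from the vector space and not only from merging.

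The correct plan is to use the degree-$2$ quotient $(\matZ/2\matZ)^6\to(\matZ/2\matZ)^5$ given by $e_5\mapsto e_6$, so that its kernel is $\langle e_5+e_6\rangle$. I would pair this with the merging $P'\to P'''$ above and colour $P'''$ by the induced colouring, then check independence on the $\le 4$ incident facets using Exercise \ref{vertex:exercise}. Next I would check that $f_b\colon(\matZ/2\matZ)^5\to(\matZ/2\matZ)^4$, which kills the remaining $e_6$, together with the merging $P'''\to P''$, satisfies $\lambda''\circ g=f_b\circ\lambda'''$. The collapsibility condition for both partial mergings would be inherited from the known one for $P'\to P''$, since each piece is a union of dodecahedra that is a ball.

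Each of $p_1$ and $p_2$ then has degree $2$. Its ramification locus is the image of the corresponding merging locus, which consists of one of the two families of red faces in Figure \ref{dodecaedri2:fig} (the squares $\alpha\times\beta$ versus $\alpha'\times\beta$). A degree-two regular branched cover can only have simple points, where the local group is $\matZ/2\matZ$ acting as a rotation by $\pi$. The nodal points of Proposition \ref{80tori} arise where a $\alpha$-type square meets an $\alpha'$-type square in a neighbouring prism, as in Figure \ref{tori:fig}-(right). I would therefore check that within one family the tori are pairwise disjoint. The red and green tori are disjoint from each other, while the blue ones meet both; so the split should be blue versus red-and-green. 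That would give $40$ disjoint tori downstairs for $p_2$, and the preimage in $N$ of the other $40$ tori, now disjoint after separating the nodes, for $p_1$.

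The main obstacle is choosing the intermediate merging and kernel so that the $e_5/e_6$ parity alternation matches the blue versus red-and-green splitting of the tori. I would settle this by inspecting Figures \ref{dodecaedri2:fig} and \ref{tori:fig} directly.
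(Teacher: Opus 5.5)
Your framework matches what the paper does with $P^*$ and $\lambda^*$: an intermediate merged polyhedron with a colouring in $(\matZ/2\matZ)^5$, then the merging construction applied twice. But both choices you finally commit to fail. First, $F'_{10+k}$ and $F'_{20+k}$ cannot be merged. Since $\lambda'$ is a colouring and gives them the same colour, they are disjoint, so the preimage of the merged vertex is two isolated vertices of $K'$, which is not collapsible.

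Second, and decisively, $\langle e_5+e_6\rangle$ is the wrong order-two subgroup of the deck group $\ker f_2=\langle e_5,e_6\rangle$ of $p$. The deck element fixing a merging face $F'_a\cap F'_b$ pointwise is $\lambda'(F'_a)+\lambda'(F'_b)$. This equals $e_5$ for both squares $\alpha\times\beta$ and $\alpha'\times\beta$ of every prism $F''_k, F''_{10+k}$ with $k$ odd, and equals $e_6$ when $k$ is even. So at each of the $160$ nodes the isotropy is $\langle e_5,e_6\rangle$ acting by $(z,w)\mapsto(\pm z,\pm w)$, and $e_5+e_6$ acts as $-\mathrm{id}$ with an isolated fixed point. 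Consequently $M/\langle e_5+e_6\rangle$ has $160$ cone points over $\matRP^3$ and is not a manifold. The second map is then still branched over all $80$ tori, nodes included. In colouring language, the independence check you postponed fails at each vertex of $P'$ over a node: its four facets carry colours $a+e_5,\,a,\,b+e_6,\,b$, whose sum vanishes once $e_5=e_6$. Your assertion that a degree-two regular branched cover has only simple points is precisely what breaks here.

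The same computation rules out your final splitting into $\alpha\times\beta$ squares versus $\alpha'\times\beta$ squares. The two squares of a prism have the same isotropy, so no intermediate cover separates them. It also shows that a node always joins a torus of an odd prism to one of an even prism, since the two branches at a node are fixed by different generators. The correct bipartition is therefore by the parity of $k$: each class is $10$ prisms times $4$ tori, i.e.\ $40$ disjoint tori in $20$ parallel pairs.

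Your very first idea was in fact right, and the parity alternation is the mechanism, not an obstacle. Killing $e_6$ makes $\lambda'(F'_k),\lambda'(F'_{10+k}),\lambda'(F'_{20+k})$ all equal to $e_2$ for even $k$, and likewise makes the corresponding facets of the other ring all $e_4$. So you merge these three facets into the full prism only for even $k$ and leave the odd prisms untouched. This is exactly the paper's $P^*$ with $\lambda^*$; killing $e_5$ and merging the odd prisms is symmetric. Then $M\to N=M/\langle e_6\rangle$ is branched along the image of the fixed set of $e_6$, which lies over the even-prism tori. The map $N\to\Sigma\times\Sigma$ is branched along the $40$ odd-prism tori, with local model $(z,w)\mapsto(z,w^2)\mapsto(z^2,w^2)$ at the nodes. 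Both steps thus have only simple points.
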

\begin{proof}
From Figure \ref{tori:fig}-(left) we deduce that we can naturally bipartite the intersection graph of the 80 tori in Proposition \ref{80tori} so that intersecting tori are assigned different colors. This way, we obtain two collections of 40 tori, each consisting of 20 pairs of parallel tori, and $p_2$ is the double branched cover along one of these two collections.\end{proof}
{We can in fact build this double branched cover via one more polyhedron $P^*$ that is intermediate between $P'$ and $P'' = D_1\times D_2$. The polyhedron $P^*$ is obtained from $P'$ by merging the facets 
$$
F^*_k = F_k' \cup F_{10+k}' \cup F_{20+k}', \qquad
F^*_{10+k} = F_{30+k}' \cup F_{40+k}' \cup F_{50+k}'
$$
only for the even values $k\in\{0,2,4,6,8\}$. We then assign to $P^*$ the colouring $\lambda^*$ in $(\matZ/2\matZ)^5$ defined as follows:
\begin{align*}
\lambda^* (F_k') & = e_1+e_5 \quad
& \lambda^* (F_{10+k}') & = \lambda'(F_{20+k})=  e_1 \\
\lambda^* (F_{30+k}') & = e_3+e_5 \quad
& \lambda^* (F_{40+k}') & = \lambda'(F_{50+k}') =  e_3 \\
\lambda^* (F_{k-1}^*) & = e_2 \quad & \lambda^* (F_{k+9}^*) & = e_4 
\end{align*}
for every odd value $k\in \{1,3,5,7,9\}$.
}

\subsubsection{The covering $\tilde{M} \to M$}
Finally, we prove the following.

\begin{prop}
    The ramification locus of the double branched cover $\tilde{M} \to M$ consists of 160 orientable surfaces of genus 2 and 40 of genus 3, all with trivial normal bundle. 
    
    By interpreting $P'$ as a right-angled hyperbolic 120-cell $P''$ {with merged facets}, we assign to $M$ the structure of a hyperbolic $\pi$-orbifold {with singular set the ramification locus. The singular set} consists of totally geodesic surfaces. 
\end{prop}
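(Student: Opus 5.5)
The plan is to realise $M$ as the quotient of the hyperbolic manifold $\tilde M$ by an isometric involution. The ramification data and the orbifold structure then both come from the fixed set of that involution.

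\emph{The deck involution.} Since $f_1$ forgets $e_7$, the deck group of $\tilde p$ is generated by
$$\tau\colon P_v\to P_{v+e_7},$$
acting by the identity of the $120$-cell. It is well defined because all gluings are by identity maps. Since each copy of $P$ is a right-angled hyperbolic $120$-cell, $\tau$ is an isometry of $\tilde M$, and $M=\tilde M/\tau$ as spaces.

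A point of $\tilde M$ is fixed by $\tau$ exactly when it lies on a facet along which $P_v$ is glued to $P_{v+e_7}$, or on an intersection of such facets, i.e. on a facet $F_q$ with $\lambda(q)$ having nonzero $e_7$-component. Since $\tau$ is the identity on each $P_v$, it has no interior fixed points. I would then compare $\lambda$ with $\lambda'\circ g_1$ inside each merged facet of $P'$. We have $\lambda(q_0p^k)=\lambda'(F'_k)+e_7$, $\lambda(q_1p^k)=\lambda(q_4p^k)=\lambda'(F'_k)$, $\lambda(q_2p^k)=\lambda'(F'_{10+k})+e_7$ and $\lambda(q_3p^k)=\lambda'(F'_{10+k})$, with the analogous equalities for indices $6,\dots,11$. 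So the merging locus of $P\to P'$ consists, for each $k$ and each of the two halves, of the three pentagons
$$F_{q_0p^k}\cap F_{q_1p^k},\qquad F_{q_0p^k}\cap F_{q_4p^k},\qquad F_{q_2p^k}\cap F_{q_3p^k}.$$
This gives $60$ pentagons in total, pairwise disjoint by Remark~\ref{nonadj:remark}; for instance $F_{q_1}$ and $F_{q_4}$ are not adjacent. The fixed set of $\tau$ is the preimage of these pentagons. Near such a pentagon, $\tau$ fixes its tangent $2$-plane and acts as $-1$ on the normal $2$-plane: it swaps the two quarter-spaces on either side of the merging face and reflects across the pentagon. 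So the local isotropy is $\matZ/2\matZ$ with eigenvalues $\{-1,-1,1,1\}$.

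This yields at once the $\pi$-orbifold structure on $M$, with totally geodesic singular set equal to the ramification locus. It also shows that the singular set consists of embedded surfaces and that the branching is simple. Equivalently, the structure is that of $P'$ viewed as the right-angled $120$-cell with some facets merged.

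\emph{Topology of the surfaces.} Each pentagon is a $2$-face $F$ of $P$, so by Section~\ref{submanifolds:subsubsection} its preimage in $\tilde M$ (hence its image in $M$) is a union of copies of the surface $N_F$. Here $N_F$ is obtained by colouring the five edges of $F$ with the colours of the five dodecahedra of $P$ adjacent to $F$ along its edges. Since the right-angled pentagon has $\chi=1-\tfrac52+\tfrac54=-\tfrac14$, a rank-$r$ edge colouring gives $\chi(N_F)=-2^{r-2}$. Thus genus $2$ corresponds to $r=3$ and genus $3$ to $r=4$.

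The number of copies in $M$ is determined by the index of the subgroup of $(\matZ/2\matZ)^6$ spanned by $\lambda'$ of the facets of $P'$ meeting the pentagon. I would compute this, together with the rank $r$, for the three pentagon types. The symmetry group $G$ reduces the work to essentially the three types for $k=0$, plus a parity check in $k$. The target is to get rank $3$ (respectively $4$) for the appropriate types, with copy counts summing to $160$ and $40$.

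Orientability of each $N_F$ follows if the edge colours lie in an affine hyperplane avoiding $0$, i.e. if they admit a linear functional equal to $1$ on all of them. This can be checked on the explicit vectors. Triviality of the normal bundle follows because the normal bundle is orientable (both $M$ and the surface are orientable) and has a nowhere-zero section. Namely, push the pentagon off itself, perpendicularly, into the adjacent dodecahedron $F_{q_1p^k}$ (respectively $F_{q_3p^k}$). Right-angledness makes these pushoffs match under the reflections across the edges, so the section globalises.

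\emph{Main obstacle.} I expect the main difficulty to be the bookkeeping in the second step. One has to identify correctly which five dodecahedra surround each pentagon, using the adjacency structure of the Hopf rings from Exercise~\ref{vertex:exercise}, and then compute the ranks and indices so that the counts come out exactly as $160$ surfaces of genus $2$ and $40$ of genus $3$. As a sanity check I would verify that the total is consistent with $\chi(\tilde M)=2\chi(M)-\chi(\mathrm{Fix})$, namely $1088=608+480$. This requires $\chi(\mathrm{Fix})=-480$, which indeed equals $160\cdot(-2)+40\cdot(-4)$.
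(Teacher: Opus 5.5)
Your overall route is the paper's. The ramification locus is the image of the $60$ pentagons separating merged dodecahedra, and its topology is read off via Section~\ref{submanifolds:subsubsection}. The step that actually proves the statement, however, is only announced as a ``target'', and the recipe as you state it gives the wrong numbers. The copies of a pentagon $F$ in $\tilde M$ are indexed by $V/V_F$, where $V_F$ is spanned by the colours of the two dodecahedra containing $F$. So $N_F$ must be built from the edge colours \emph{reduced modulo $V_F$}, not from the raw colours in $V$ of the five surrounding dodecahedra.

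For example, take $F=F_{p^k}\cap F_{q_1p^k}$ with $k$ odd. The surrounding facets $F_{q_5p^k}$, $F_{q_2p^k}$, $F_{p^{k+1}}$, $F_{q_1p^{k-1}}$ carry $e_1$, $e_1+e_7$, $e_2+e_6+e_7$, $e_2+e_6$, and the fifth carries $e_2$ or $e_2+e_7$. In $V$ these span $\langle e_1,e_2,e_6,e_7\rangle$, of rank $4$, so your formula would give genus $3$. Modulo $V_F=\langle e_1+e_5,e_7\rangle$ the rank is $3$.

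This reduction is precisely the paper's proof:
\begin{itemize}
\item $V_1=V_2=\mathrm{Span}(e_1+e_5,e_1+e_5+e_7)$ for the two pentagons inside $F'_k$, and $V_3=\mathrm{Span}(e_1,e_1+e_7)$ for the one inside $F'_{10+k}$;
\item in the $5$-dimensional quotients the edge colours have ranks $3,3,4$, so $\chi=-2,-2,-4$;
\item each pentagon contributes $2^{5-3}=4$, $4$, $2^{5-4}=2$ components, giving $20\cdot 2\cdot 4=160$ surfaces of genus $2$ and $20\cdot 2=40$ of genus $3$.
\end{itemize}
Your $\lambda'$-index in $(\matZ/2\matZ)^6$ does return the same $4$ and $2$. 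Your orientability test also works once the colours are reduced modulo $V_F$.

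Your description of the fixed set of $\tau$ is also wrong as written. No facet has colour $e_7$, and a point $x$ interior to $F_{p^k}$ is not fixed: $x\in P_v$ is identified only with $x\in P_{v+\lambda(p^k)}$, and $\lambda(p^k)\neq e_7$. Your criterion would give a $3$-dimensional fixed set. The correct criterion is that $\tau$ fixes the point represented by $x\in P_v$ iff $e_7\in V_x:=\mathrm{Span}\{\lambda(q): x\in F_q\}$. Since $\ker f_1=\{0,e_7\}$, $f_1\circ\lambda=\lambda'\circ g_1$, and $\lambda'$ is a colouring of $K'$, this holds iff two facets through $x$ are merged by $g_1$. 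In that case $e_7$ is the sum of exactly those two of the independent colours at $x$. This gives at once:
\begin{itemize}
\item the fixed set;
\item the disjointness of the pentagons, which Remark~\ref{nonadj:remark} alone does not cover (e.g.\ $F_{p^k}\cap F_{q_1p^k}$ versus $F_{p^{k+1}}\cap F_{q_1p^{k+1}}$);
\item the $\{-1,-1,1,1\}$ model at every fixed point, including edges and vertices.
\end{itemize}

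Your pushoff argument for the normal bundle is fine, but it genuinely uses $M$. There each pentagon lies inside one facet of $P'$, so its $F_{q_1p^k}$-side is canonical. In $\tilde M$, crossing the edges on $F_{q_5p^k}$ and then on $F_{q_2p^k}$ returns to the same copy of the pentagon (as $e_1+(e_1+e_7)=e_7\in V_F$), but with the two $F_{q_1p^k}$-sides exchanged.

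When reducing to one prism, use right multiplication by $p$, which intertwines $\lambda$ with $e_1\leftrightarrow e_2$, $e_3\leftrightarrow e_4$, $e_5\leftrightarrow e_6$. Left multiplication by $k$ does not work: $q_1p\mapsto q_7p$ and $q_3p\mapsto q_9p$ force $e_5\mapsto e_5$, while $q_7p\mapsto q_1p^6$ and $q_9p\mapsto q_3p^6$ force $e_5\mapsto e_6$. So the prisms $F''_{10+k}$ need their own, analogous check.

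With these corrections your argument is complete. It also justifies orientability and triviality of the normal bundle, which the paper leaves implicit.
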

{In particular, the singular set of this branched covering consists only of simple points, and again as a sanity check we compute 
    $$\chi(\tilde{M})=2\chi(M) - 160 (-2) - 40(-4) = 1088.$$
\begin{proof}
    The merging locus of $P'$ consists of the green faces $f_1,f_2,f_3$ shown in Figure \ref{dodecaedri3:fig}-(left), repeated in each prism. Figure \ref{dodecaedri3:fig}-(right) shows the colours of the edges of $f_1,f_2,f_3$ induced from $\lambda$. By what said in Section \ref{submanifolds:subsubsection} the preimage of $f_i$ in $\tilde{M}$ consists of some copies of the surface $S_i$ obtained from the pentagon $f_i$ equipped with the induced colouring in $V/V_i$, where $V = (\matZ/2\matZ)^7$ and $V_i$ is the subspace generated by the colourings of the two facets containing $f_i$. We have
    $$V_1 = V_2 = {\rm Span} (e_1+e_5, e_1+e_5+e_7), \quad V_3 = {\rm Span} (e_1, e_1+e_7)$$
    In $V/V_1 = V/V_2$ the pentagons $f_1$ and $f_2$ are coloured with 3 independent vectors, hence $S_1=S_2$ is tessellated into $2^3=8$ pentagons, hence $\chi(S_1)=\chi(S_2)=-2$ and $S_1,S_2$ are orientable surfaces of genus 2. In $V/V_3$ the pentagon $f_3$ is coloured with 4 independent vectors and hence $\chi(S_3)=-4$ and $S_3$ in an orientable surface of genus 3. 
    
    Every pentagon $f_1,f_2$ gives rise to $2^{5-3}=4$ surfaces of genus two, while each $f_3$ gives rise to $2^{5-4}=2$ surface of genus three. Since there are 20 prisms and hence 20 pentagons of each type, we get $20\times 2\times 4 = 160$ surfaces of genus two and $20 \times 2 = 40$ surfaces of genus three.
\end{proof}

\begin{figure}
 \begin{center}
\centering
\labellist
\small\hair 2pt
\pinlabel $F_{0}'$ at 140 100
\pinlabel $F_{10}'$ at 257 150
\pinlabel $F_{20}'$ at 14 150
\pinlabel $f_1$ at 127 200
\pinlabel $f_2$ at 122 234
\pinlabel $f_3$ at 210 200
\pinlabel $F_0''$ at 0 0
\pinlabel $f_1$ at 375 235
\pinlabel $f_2$ at 525 235
\pinlabel $f_3$ at 447 93
\pinlabel $e_1$ at 315 245
\pinlabel $e_1+e_7$ at 415 245
\pinlabel $e_2+e_6+e_7$ at 373 290
\pinlabel $e_2+e_6$ at 320 190
\pinlabel $e_2+e_7$ at 420 190
\pinlabel $e_1$ at 470 245
\pinlabel $e_1$ at 582 245
\pinlabel $e_2+e_6+e_7$ at 528 290
\pinlabel $e_2$ at 490 190
\pinlabel $e_2+e_6$ at 575 190
\pinlabel $e_2+e_6+e_7$ at 365 130
\pinlabel $e_1+e_5+e_7$ at 350 70
\pinlabel $e_2+e_7$ at 505 130
\pinlabel $e_2$ at 440 32
\pinlabel $v$ at 500 70
\endlabellist
  \includegraphics[width = 12 cm]{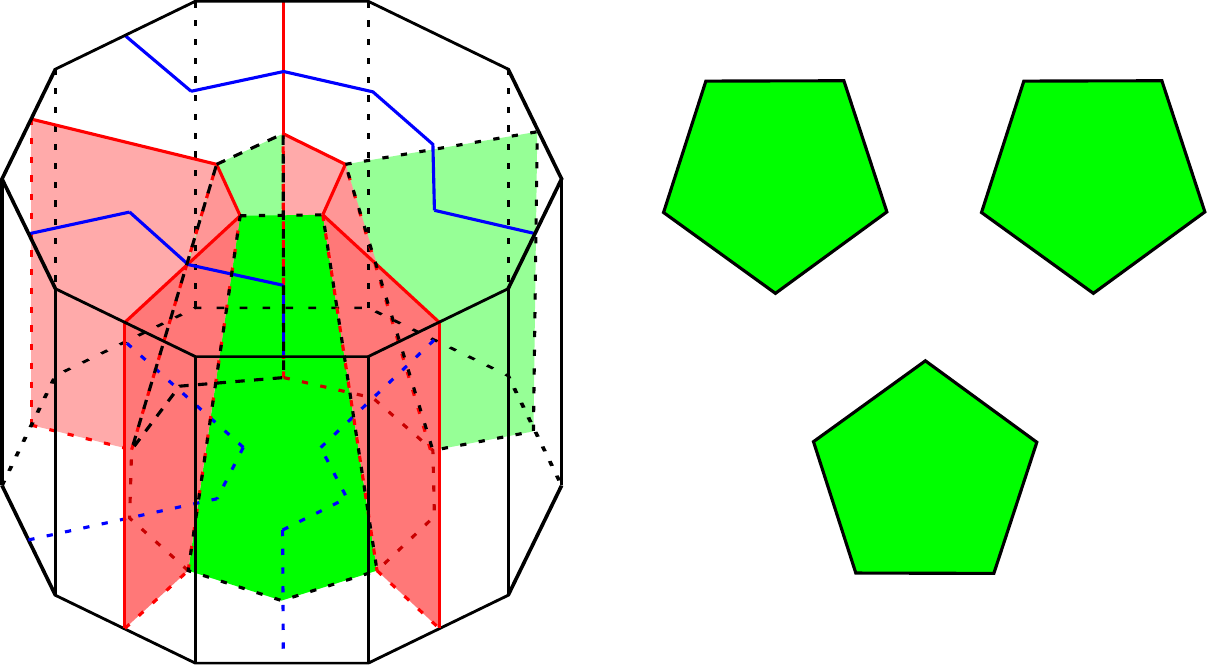}
 \end{center}
 \caption{The merging locus of $P'$ consists of the green faces $f_1,f_2,f_3$, repeated in each prism (left). The induced colouring of the green faces, where $v$ equals either $e_3+ \cdots$ or $e_4 + \cdots$ (right).}
 \label{dodecaedri3:fig}
\end{figure}

\vspace{0.3cm}

\section{The symplectic structure}\label{symplecticstuff}
In this final section we prove that the manifold $M$ we have described in Section \ref{mainexample} admits a symplectic structure. 
\par
The starting point of this is the topological description of $M$ we provided in Proposition \ref{80tori} as a four-fold branched cover $p\colon M\rightarrow \Sigma\times \Sigma$, with ramification locus a given collection of transversely intersecting tori. The key observation is that these are product tori, so in particular they are Lagrangian for a fixed product symplectic structure on $\Sigma\times \Sigma$. From this, we will show how to construct a symplectic form on $M$ via several steps which will follow by adapting constructions of (among the others) Auroux \cite{Aur} and Gompf \cite{Gom}; for the sake of clarity we will describe them in somewhat general terms before specializing to our case of interest.
\par
The fundamental principle underlying constructions in symplectic topology is the Moser technique, first introduced in \cite{Mos} to show that the total volume is the only invariant of a volume form up to isotopy. In symplectic geometry, this technique {is used} to deduce \textit{local} results starting from \textit{pointwise} assumptions (see for example \cite[Ch. 3]{MS}). For example, the proof of Darboux's theorem (symplectic manifolds $(X^{2n},\omega)$ are locally standard) starts from the linear algebra fact that all non-degenerate skew-symmetric bilinear forms on $\mathbb{R}^{2n}$ (with coordinates $x_1,y_1,\dots,x_n,y_n$) are equivalent to 
\begin{equation*}
\omega_{std}=dx_1\wedge dy_1+\dots +dx_n\wedge y_n.
\end{equation*}
Because of this, in the exposition that follows we will place a specific emphasis on ensuring pointwise conditions.

\subsection{Turning Lagrangian surfaces into symplectic ones} Recall that a surface $S$ (which we always assume to be oriented) in $(Y^{4},\omega_Y)$ is Lagrangian (resp. symplectic) if 
\begin{equation*}
\omega\lvert_{S}=0 \text{ (resp.}>0\text{) at every point.}
\end{equation*}
Of course, this implies the cohomological condition 
\begin{equation*}
\langle[\omega],[S]\rangle=0 \text{ (resp.}>0\text{),}
\end{equation*}
so that a Lagrangian surface cannot be isotopic to a symplectic one.
\par
Our construction begins with a variation {of} \cite[Lemma 1.6]{Gom}, which proves that (under a suitable cohomological condition) a collection of disjoint Lagrangians will be symplectic for a small perturbation of the original symplectic form. Our result generalizes this to transversely intersecting Lagrangians; while the result holds in greater generality, for simplicity of notation we will make assumptions fitting our case of interest (surfaces are tori such that each pair intersects in at most one point).
\begin{lemma}\label{coho}
Let $(Y,\omega_Y)$ be a closed symplectic $4$-manifold, and consider oriented embedded Lagrangian tori $\{T_i\}$ in $Y$ with the following properties:
\begin{enumerate}
\item the $T_i$ intersect each other transversely in at most one point;
\item there is a cohomology class $[\eta]\in H^2(Y;\mathbb{R})$ such that $\langle[\eta],[T_i]\rangle>0$ for all $i$. 
\end{enumerate}
  Then there is a symplectic form $\omega'_Y$ (arbitrarily close to $\omega_Y$) such that all the $T_i$ are symplectic.
\end{lemma}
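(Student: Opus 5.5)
We adapt Gompf's argument \cite[Lemma 1.6]{Gom}: build a closed $2$-form $\eta$ representing the given class which is pointwise positive on every $T_i$, and set $\omega'_Y=\omega_Y+t\eta$ for small $t>0$. Since $\omega_Y|_{T_i}=0$, we get $\omega'_Y|_{T_i}=t\,\eta|_{T_i}>0$, so each $T_i$ is symplectic. Non-degeneracy of $\omega'_Y$ is an open condition, and $Y$ is compact, so it holds for $t$ small enough; $\omega'_Y$ is then arbitrarily close to $\omega_Y$. Everything therefore reduces to constructing $\eta$ that is positive on each $T_i$ pointwise, not just cohomologically.

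Start with a closed representative $\eta_0$ of $[\eta]$. On each torus $T_i$, hypothesis (2) gives $\int_{T_i}\eta_0=c_i>0$. Choose an area form $\sigma_i$ on $T_i$ with total area $c_i$. Then $\eta_0|_{T_i}-\sigma_i$ is exact on $T_i$, equal to $d\beta_i$ for some $1$-form $\beta_i$ on $T_i$. By Weinstein's neighbourhood theorem, a neighbourhood $U_i$ of $T_i$ is identified with a neighbourhood of the zero section in $T^*T_i$, with projection $\pi_i$. Put $\eta_1=\eta_0-\sum_i d(\rho_i\,\pi_i^*\beta_i)$, where $\rho_i$ is a cutoff equal to $1$ near $T_i$. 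This form is closed, cohomologous to $\eta_0$, and restricts to $\sigma_i$ on $T_i$.

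The main obstacle is the intersection points, where the corrections for different tori interfere. At $x\in T_i\cap T_j$ the term $d(\rho_j\pi_j^*\beta_j)$ need not vanish on $T_xT_i$. We handle this by making every correction trivial near intersection points, in two steps.

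\emph{Step one: local model.} By hypothesis (1) and a standard Weinstein-type argument for transverse Lagrangian pairs, there are Darboux coordinates $(x_1,y_1,x_2,y_2)$ near each intersection point in which $T_i=\{x_2=y_2=0\}$ and $T_j=\{x_1=y_1=0\}$. Indeed, for two transverse Lagrangian planes we may take $T_i=\{y_1=y_2=0\}$ and $T_j=\{x_1=x_2=0\}$, and a linear change turns this into the form above. Only pairs of tori meet at each point, since every pair meets at most once and (as in our case) no three tori pass through a common point; this should be assumed or arranged.

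\emph{Step two: normalize $\eta_0$ near intersections.} Replace $\eta_0$ by a cohomologous form equal to $\epsilon(dx_1\wedge dy_1+dx_2\wedge dy_2)$ near each intersection point. This uses the Poincaré lemma on a ball: the difference is exact there, and we subtract $d(\chi\alpha)$ for a cutoff $\chi$. Near the point this form restricts to a positive multiple of the area form on both $T_i$ and $T_j$, with signs fixed by the orientations. If the orientation at a point is negative, use $\epsilon(dx_1\wedge dy_1-dx_2\wedge dy_2)$ instead. Then choose each $\sigma_i$ to agree with $\eta_0|_{T_i}$ on small discs around its intersection points, rescaled elsewhere so that the total area is still $c_i$; this is possible since the local contributions are tiny. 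Now $\eta_0|_{T_i}-\sigma_i$ vanishes near the intersection points. Its integral over $T_i$ is zero and the discs are contractible, so the primitive $\beta_i$ can be chosen to vanish on those discs.

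With these choices, $\pi_i^*\beta_i$ vanishes on a neighbourhood of each intersection point, provided the Weinstein neighbourhoods are chosen compatibly with the local model. The corrections for different tori then have disjoint supports near the $T_i$, and $\eta_1$ is positive on every $T_i$ pointwise. This completes the construction of $\eta$ and hence the proof.
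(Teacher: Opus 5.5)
Your proof is correct. It shares the paper's overall scheme: perturb $\omega_Y$ by $\varepsilon$ times a closed representative of $[\eta]$ that restricts to an area form on each $T_i$. That representative is built by adding exact corrections $d(\text{cutoff}\cdot\pi_i^*\beta_i)$ supported in tubular neighbourhoods. Where you genuinely differ is in how you kill the interference at the intersection points.

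\emph{The paper's route.} It keeps $\eta$ arbitrary and instead chooses the Weinstein neighbourhoods $\nu T_i\cong T_i\times\Delta_i$ so that near $p=T_i\cap T_j$ the other torus is exactly a cotangent fibre, $T_j\cap\nu T_i=\{p\}\times\Delta_i$. Then $\alpha_i$, $d\alpha_i$ and $df_i\wedge\alpha_i$, all pulled back from $T_i$ or involving $\alpha_i$, restrict to zero on $T_j$ automatically, with no condition on $\alpha_i$ near $p$. The price is a simultaneous normal form for pairs of transverse Lagrangians: transitivity of $\mathrm{Sp}(4,\mathbb{R})$ on such pairs, plus a Moser argument.

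\emph{Your route.} You first normalize $\eta_0$ near each intersection point to a constant form positive on both planes. You then match $\sigma_i$ to it on small discs and take primitives $\beta_i$ vanishing on those discs, so the corrections vanish identically near the intersection points. This costs an extra normalization step and a relative-primitive argument. In exchange, you only need smooth coordinates straightening the transverse pair and arbitrary thin tubular neighbourhoods.

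Two small remarks on your write-up.
\begin{itemize}
\item Your caveat about choosing the Weinstein neighbourhoods ``compatibly with the local model'' is unnecessary. Since $\pi_i^*\beta_i$ vanishes on the open set $\pi_i^{-1}(D)$, and $T_j\cap U_i\subset\pi_i^{-1}(D)$ as soon as $U_i$ is thin enough, thinness alone suffices. Indeed no Weinstein structure is needed at all: Lagrangianity enters only through $\omega_Y|_{T_i}=0$.
\item The coordinates in Step one need not be Darboux. As literally written, $\{x_2=y_2=0\}$ is symplectic for $\sum dx_k\wedge dy_k$, not Lagrangian. Since you never use $\omega_Y$ in those coordinates, this is only cosmetic.
\end{itemize}
Your flag that only double points occur matches an implicit assumption in the paper's proof as well: two transverse tori cannot both be cotangent fibres of $T_i$ at the same point.
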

Before proving this, let us recall some basic facts about Lagrangian submanifolds. First, by the Weinstein neighborhood theorem \cite[Theorem 3.4.13]{MS}, we can identify a tubular neighborhood $\nu L$ of a given Lagrangian $L$ in $(Y,\omega_Y)$ with a tubular neighborhood of $L$ in its cotangent bundle $T^*L$ equipped the its canonical symplectic form: for local coordinates $(y_1,y_2)$ on $L$, we call the coordinates $(x_1,x_2)$ for the cotangent fiber $T^*_pL$ in the ordered basis $\{dy_1,dy_2\}$, and the tautological on $T^*L$ form is given by the standard form $dx_1\wedge dy_1+dx_2\wedge dy_2$.
\par
In fact, given a collection of transversely intersecting Lagrangians $\{L_i\}$, by the same proof we can arrange the further property that at a given intersection point $p= L_i\cap L_j$ under these identifications the intersection $L_j\cap \nu L_i$ is a disk in the cotangent fiber $T^*_pL_i$. This is possible (via Moser's argument, cf. for example \cite{McRae}) because the symplectic group of a symplectic vector space acts transitively on pairs of transverse (unoriented) Lagrangians.
\par
Finally, because in our setup we are working with tori (so that the cotangent bundles are trivial), we can in fact simplify the notation by considering the local coordinates $(y_1,y_2)$ as induced by global ones (modulo some lattice in $\mathbb{R}^2$). Furthermore, we can choose our tubular neighborhoods to be products $\nu T_i\cong T_i\times \Delta_i$ (with $\Delta_i$ an open disk), and assume that the intersections between the Lagrangians occur along fibers $T_j\cap \nu T_i=\{p\}\times \Delta_i$.
\begin{proof}Consider a representative closed $2$-form $\eta$ for the cohomology class in assumption (2). Choose an area form $\omega_i\in \Omega^2(T_i)$ on each $T_i$ such that
\begin{equation*}
\int_{T_i}\eta=\int_{T_i}\omega_i>0,
\end{equation*}
so that one can find $\alpha_i\in \Omega^1(T_i)$ with
\begin{equation*}
d\alpha_i=\omega_i-\eta\lvert_{T_i}\in  \Omega^2(T_i).
\end{equation*}
We will also interpret $\alpha_i$ as a form on $\nu T_i$ via pull-back. Finally choose compactly supported functions
\begin{equation*}
f_i:\Delta_i\rightarrow \mathbb{R}
\end{equation*}
with $f_i(0)=1$, which we also interpret as a function on $\nu T_i$ by pull-back. We will consider the $1$-forms with compact support in $\nu T_i$ given by
\begin{equation*}
 \beta_i=f_i\alpha_i\in\Omega^1_c(\nu T_i)\subset \Omega^1(Y).    
\end{equation*}
In the local coordinates introduced above,  $\alpha_i$ lies in $\mathrm{span}\{dy_1,dy_2\}$, while $df_i$ lies in $\mathrm{span}\{dx_1,dx_2\}$ at every point.
Because of this, we see that when taking the exterior derivative
\begin{equation*}
d\beta_i=df_i\wedge \alpha_i+f_i\wedge d\alpha_i\in \Omega^2(Y)
\end{equation*}
the first summand restricts to zero on each $T_j$ and therefore
\begin{equation*}
d\beta_i\lvert_{T_j}=
\begin{cases}
\omega_i-\eta\lvert_{T_i}\text{ if }i=j\\
0 \text{ otherwise.}
\end{cases}
\end{equation*}
Because of this, the closed two-form
$$\eta'=\eta+\sum d\beta_i$$
restricts to the area form $\omega_i$ on each $T_i$, and for $\varepsilon>0$ sufficiently small the form 
\begin{equation*}
\omega'_Y=\omega_Y+\varepsilon\eta'\in\Omega^2(Y)
\end{equation*}
is a new symplectic form on $Y$ (being symplectic is an open condition) for which the $T_i$ are symplectic surfaces.\end{proof}

Note that in general the $T_i$ are not necessarily $\omega'_Y$-orthogonal; on the other hand, we can arrange this after a smooth isotopy under an additional positivity assumption (recall that symplectic manifolds are naturally oriented).
\begin{lemma}
In the situation of the previous lemma, assume further that all intersections between the Lagrangians are \textbf{positive}. Then there exists a symplectic form $\omega''_Y$ (smoothly isotopic to $\omega'_Y$) for which the original tori $\{T_i\}$ are all symplectic and orthogonal.
\end{lemma}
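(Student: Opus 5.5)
The plan is to work locally near each intersection point and then use a Moser-type argument. Fix an intersection point $p=T_i\cap T_j$. After Lemma \ref{coho}, both $T_i$ and $T_j$ are $\omega'_Y$-symplectic, and they meet transversely and positively at $p$. Write $V=T_pT_i$ and $W=T_pT_j$. These are symplectic $2$-planes with $V\oplus W=T_pY$, and positivity means the orientation of $V\oplus W$ agrees with the symplectic orientation.

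\textbf{Step 1: pointwise linear algebra.} Positivity is exactly what lets us deform $W$ through symplectic planes transverse to $V$, keeping orientations, until it becomes the $\omega'_Y$-orthogonal complement $V^{\omega}$. Concretely, write $W$ as the graph of a linear map $A\colon W_0\to V$ over $W_0=V^{\omega}$, where $W_0$ is itself transverse to $V$. The straight-line path $W_t=\mathrm{graph}(tA)$ stays transverse to $V$. I expect it to stay symplectic, but this needs to be checked from the positivity assumption. If the straight-line path fails, I will instead use the connectedness of the space of oriented symplectic complements with positive intersection. Since $\omega|_V>0$, the condition "$\omega|_{W_t}>0$ and $V\oplus W_t$ positive" should define a contractible (convex-like) set of graphs.

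\textbf{Step 2: realize the rotation by an isotopy.} Using this path, isotope $T_j$ near $p$, keeping $p$ and $T_i$ fixed, so that $T_pT_j$ rotates to $V^{\omega}$. The moved surface must remain symplectic and transverse throughout, which holds because being symplectic and transverse are open conditions and the deformation is supported in a tiny ball. The intersections are isolated, so these local isotopies can be done independently at each of the finitely many points. Composing them gives a global isotopy $\psi_t$ of $Y$ with $\psi_1(T_j)$ orthogonal to the $T_i$ at all intersection points.

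\textbf{Step 3: move the form rather than the tori.} The statement asks for a form for which the \emph{original} tori are orthogonal, so I will transport: set $\omega''_Y=\psi_1^*\omega'_Y$. This form is isotopic to $\omega'_Y$ via $\psi_t^*\omega'_Y$. The original tori are symplectic for $\omega''_Y$, and at each intersection point their tangent planes are $\omega''_Y$-orthogonal.

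\textbf{Alternative route and main obstacle.} One could also work directly with a local Moser argument. Choose a linear symplectic form $\omega_0$ near $p$ in which $V$ and $W$ are symplectic and orthogonal; positivity makes this possible. Interpolate $\omega'_Y$ to $\omega_0$ with a cutoff, and check that the interpolation stays nondegenerate and keeps both planes symplectic. The main obstacle, in either route, is this pointwise convexity step: verifying that positivity of the intersection guarantees a path of nondegenerate forms (or symplectic planes) connecting the given configuration to the orthogonal one. With the opposite sign, $V^{\omega}$ carries the wrong orientation, and no such path exists.
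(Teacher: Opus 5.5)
Your outline is the paper's. You deform $T_pT_j$ along the straight line of graphs over the $\omega'_Y$-complement of $T_pT_i$, realize this by an isotopy near each intersection point that fixes one of the two tori (the content of Gompf's Lemma 2.3), and pull the form back by the isotopy; your Step 3 is exactly the paper's final sentence. However, you leave unverified the step you yourself call the main obstacle, and it is the whole content of the paper's argument. It is a one-line check. Choose a symplectic basis at $p$ with $V=\mathbb{C}\times 0$, $V^{\omega}=0\times\mathbb{C}$ and $\omega'_Y=\omega_0\oplus\omega_0$, and write $W=\{Au\oplus u\}$. Then
\[
\omega'_Y(Au\oplus u,\,Au'\oplus u')=(1+\det A)\,\omega_0(u,u').
\]
Positivity of $V\oplus W$ says precisely that the given orientation of $W$ is the one transported from $V^{\omega}$ by $u\mapsto Au\oplus u$. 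So $W$ being symplectic means $1+\det A>0$, hence $1+t^2\det A>0$ for $t\in[0,1]$, and the straight-line path works. The admissible set $\{\det A>-1\}$ is star-shaped about $0$ but not convex. With a negative intersection one gets $\det A<-1$ instead, which confirms your closing remark.

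Your justification of Step 2 does not work. The isotopy is supported in a tiny ball, but it rotates $T_pT_j$ by a definite amount. So it is $C^0$-small but not $C^1$-small, and openness of the symplectic and transversality conditions gives you nothing. You must construct the isotopy so that every tangent plane of the moving surface stays admissible; this is exactly what the paper imports from the proof of Gompf's lemma.

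Concretely, take Darboux coordinates near $p$ with $T_i=\mathbb{C}\times 0$, so that $T_j$ is a graph $\{(g(w),w)\}$ with $dg_0=A$.
First, replace $g$ by $Aw$ near $0$ by cutting off $g(w)-Aw=O(|w|^2)$. This change is $C^1$-small, so here openness does apply.
Then move through $g_t(w)=\nu_t(|w|)Aw$, where $\nu_t=1-t+t\mu$ and $\mu$ is nondecreasing, equal to $0$ near $0$ and to $1$ near the edge of the ball. The tangent plane at $w$ is the graph of $A(\nu_t I+t\,r\mu'(r)\,\hat w\hat w^{T})$, whose determinant is $\det A\cdot\nu_t(\nu_t+t\,r\mu'(r))$. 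Everything stays admissible provided $r\mu'(r)<(1+\det A)/|\det A|$ when $\det A<0$, that is, provided the cutoff is spread over enough logarithmic scales.
Graphs over $0\times\mathbb{C}$ meet $\mathbb{C}\times 0$ only at $w=0$, so no new intersections appear. The flow of the vector field $(\partial_t g_t(w),0)$, suitably cut off, gives an ambient isotopy fixing $T_i$ pointwise.

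Your alternative route meets the same difficulty in another form: under a naive cutoff, the derivative-of-cutoff term in the interpolation between $\omega'_Y$ and the linear form is not small.
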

\begin{proof}
This follows from \cite[Lemma 2.3]{Gom} (which deals with a much more general situation) and its proof; in fact, we can arrange that the isotopy is supported in a neighborhood of each of the transverse intersections, with one of the two intersecting surfaces remaining fixed.
\par
This result is the differential version of the following linear algebra observation (which provides the pointwise model at the intersection point). Consider in $(\mathbb{C}^2,\omega_{std})$ a two-dimensional symplectic vector subspace $V_0$ which is positively transverse to $\mathbb{C}\times\{0\}$ (equipped with its natural complex orientation). By (positive) transversality, we can write (identifying each summand of $\mathbb{C}^2=\mathbb{C}\oplus\mathbb{C}$ with $\mathbb{R}^2$ with standard basis $\{e_1,e_2\}$) considering oriented spans throughout
\begin{equation*}
V_0=\mathrm{span}\{v_1\oplus e_1, v_2\oplus e_2\}\subset  \mathbb{C}\oplus\mathbb{C}
\end{equation*}
for some vectors $v_1,v_2\in\mathbb {{R}}^2$. The fact that $V_0$ is a positive symplectic subspace implies that (denoting by $\omega_0$ the standard form on $\mathbb{R}^2$, so that $\omega_{std}=\omega_0\oplus\omega_0$):
\begin{equation*}
0<\omega_{std}(v_1\oplus e_1, v_2\oplus e_2)=\omega_0(v_1,v_2)+1.  
\end{equation*}
From this, we see that because
\begin{equation*}
t^2\omega_0(v_1,v_2)+1>0\text{ for all }t\in[0,1],
\end{equation*}
the one parameter family of subspaces
\begin{equation*}
V_t=\mathrm{span}\{(tv_1)\oplus e_1, (tv_2)\oplus e_2\}
\end{equation*}
forms for $t\in[0,1]$ a path of symplectic subspaces positively transverse to $\mathbb{C}\times\{0\}$ connecting $V_1=V$ to $V_0=\{0\}\times\mathbb{C}$.
\par
As explained in detail in the proof of \cite[Lemma 2.3]{Gom}, from this one constructs a smooth isotopy $\varphi_t$ for $t\in[0,1]$ such that $\varphi_0=\mathrm{Id}$, for all $t\in[0,1]$ the $\{\varphi_t(T_i)\}$ form a collection of positively transverse $\omega_Y'$-symplectic tori, and the $\{\varphi_1(T_i)\}$ they are furthermore $\omega'_Y$-orthogonal. The claim of the Lemma then follows by applying the isotopy to the symplectic form instead of the Lagrangians.
\end{proof}

\begin{remark}
Note that in general two symplectic subspaces can be \textit{negatively} transverse. In the notation of the proof above can take for example
\begin{equation*}
\mathrm{span}(e_1\oplus 0, e_2\oplus 0)\text{ and } \mathrm{span}(v_1\oplus -e_1, v_2\oplus e_2)
\end{equation*}
with $\omega_0(v_1,v_2)>1$. On the other hand, orthogonal symplectic subspaces are necessarily positively intersecting, so that the condition in the lemma is necessary.
\end{remark}
The symplectic version of the Weinstein neighborhood theorem \cite[Theorem 3.4.10]{MS} implies the that each $T_i\subset (Y,\omega''_Y)$ has a neighborhood identified with $D(r_i)\times T_i$ (where $D(r_i)$ is an open disk  centered at $0$ in $\mathbb{R}^2$ of radius some $r_i>0$) equipped with the product symplectic structure $\omega_0\oplus \omega''_Y\lvert_{T_i}$.
Finally, by a final application of the Moser technique (being careful at the intersection points), we obtain the following local model result (cf. \cite[Proposition 3.5]{Sym} and \cite[Theorem 3.1]{GaySti}).
\begin{cor}\label{normal}
In the situation above, we can identify a neighborhood of
\begin{equation*}
\bigcup_i T_i\subset (Y,\omega''_Y)
\end{equation*}
with a corresponding plumbing of the product symplectic manifolds $(D(r_i)\times T_i, \omega_0\oplus \omega''_Y\lvert_{T_i})$ (for some choice of $\{r_i\}$).
\end{cor}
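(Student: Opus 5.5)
The idea is to build the neighborhood in two stages: first local models near the finitely many intersection points, then a relative Moser/Weinstein argument along the rest of each torus. The inputs are the previous lemma (the $T_i$ are $\omega''_Y$-symplectic and pairwise $\omega''_Y$-orthogonal at their intersections) and the fact, from Lemma \ref{coho}, that each pair meets at most once.

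\textbf{Step 1: model near an intersection point.} Let $p=T_i\cap T_j$. Orthogonality means $T_pY=T_pT_i\oplus T_pT_j$ is an $\omega''_Y$-orthogonal splitting into symplectic planes. By the standard Darboux argument relative to two transverse symplectic submanifolds, we find a chart near $p$ in which $\omega''_Y$ is $\omega_0\oplus\omega_0$ on $\mathbb{R}^2\oplus\mathbb{R}^2$, with $T_i=\mathbb{R}^2\times\{0\}$ and $T_j=\{0\}\times\mathbb{R}^2$.
\begin{itemize}
\item Choose coordinates $(u,v)$ near $p$ with $T_i=\{v=0\}$ and $T_j=\{u=0\}$. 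Use Darboux coordinates on each surface so that $\omega''_Y|_{T_i}$ and $\omega''_Y|_{T_j}$ are standard.
\item Form the product form $\omega_1=\omega''_Y|_{T_i}\oplus\omega''_Y|_{T_j}$ in these coordinates.
\item Orthogonality at $p$ gives $\omega''_Y=\omega_1$ at $p$. Both forms restrict identically to $T_i\cup T_j$.
\item Hence the linear path $\omega_t$ from $\omega_1$ to $\omega''_Y$ is nondegenerate on a small ball around $p$.
\item The difference $\omega''_Y-\omega_1$ vanishes when restricted to $T_i$ and to $T_j$. A relative Poincaré lemma, via a radial homotopy that preserves both coordinate planes, gives a primitive $\sigma$ vanishing on $T_i\cup T_j$.
\item The Moser vector field $X_t$, defined by $\iota_{X_t}\omega_t=-\sigma$, is then tangent to both surfaces. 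Its flow fixes $p$ and preserves $T_i$ and $T_j$.
\end{itemize}
These finitely many local charts (160 of them in our application) are exactly the plumbing regions.

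\textbf{Step 2: extend along each torus.} Each $T_i$ is symplectic with trivial normal bundle, since it has self-intersection zero in our situation. The symplectic normal bundle is therefore a trivial symplectic plane bundle, so the model $D(r_i)\times T_i$ with $\omega_0\oplus\omega''_Y|_{T_i}$ makes sense.
\begin{itemize}
\item Build an embedding of this model into $Y$ that restricts to the identity on $T_i$. Near each intersection point on $T_i$, take it to be the Step 1 chart, in which the normal disk is exactly the other torus $T_j$.
\item Interpolate to an arbitrary symplectic normal framing away from those points. Such a framing exists and is unique up to homotopy relative to the intersection points. The real obstruction is that the normal framings prescribed at the points must extend to a framing of the trivial bundle. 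Since we only fix framings at finitely many points and $SO(2)$ is connected, this is fine.
\item The pulled-back form and the product form agree along $T_i$, and also on the Step 1 neighborhoods.
\item Apply the relative Moser argument (the symplectic neighborhood theorem of \cite[Theorem 3.4.10]{MS}, done relative to a closed set where the forms already coincide) to isotope the embedding, fixing $T_i$ and the small neighborhoods of the intersection points.
\end{itemize}
After shrinking the radii $r_i$, the neighborhoods of different tori meet only inside the Step 1 charts. There, by construction, each disk fiber of one tube is a piece of the other torus. The union is therefore precisely the plumbing.

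\textbf{Main obstacle.} The hardest point is the compatibility in Step 2: the Moser deformation must be relative to the already-standardized plumbing charts. This requires a primitive of $\omega_{\text{model}}-\omega''_Y$ that vanishes on the torus and on the plumbing charts simultaneously. I would first produce it with the homotopy operator for fiberwise radial retraction onto $T_i$. This operator automatically vanishes where the two forms coincide, provided the Step 1 charts are chosen fiber-preserving, meaning $T_j$ is a fiber. If that fails, the fallback is to cut off the primitive with a bump function near the intersection points and absorb the error by shrinking the neighborhoods. This follows \cite[Proposition 3.5]{Sym} and \cite[Theorem 3.1]{GaySti}.
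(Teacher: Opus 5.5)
Your overall plan matches the ideas the paper invokes: Darboux charts at the intersection points where the two tori are coordinate planes, then a relative Weinstein--Moser argument along each torus. The paper uses the Weinstein neighborhood theorem plus a Moser argument ``being careful at the intersection points'', citing Symington and Gay--Stipsicz; you organize the same ingredients in the opposite order. Step 2 is sound once its input from Step 1 is correct.

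The gap is in Step 1, exactly the delicate point the paper flags. Your claim that the Moser field $X_t$ is tangent to $T_i$ and $T_j$ is false in general. Write $\tau=\omega''_Y-\omega_1$. Your relative Poincar\'e lemma only gives a primitive $\sigma$ whose \emph{pullback} to each surface vanishes. So $\omega_t(X_t,w)=-\sigma(w)=0$ for $w\in TT_i$ only says $X_t\in(TT_i)^{\omega_t}$. Since $\omega_t|_{T_i}=\omega''_Y|_{T_i}$ is an area form for every $t$, this complement is transverse to $TT_i$. Hence $X_t$ is tangent to $T_i$ only where it vanishes. The implication you used is the one valid for \emph{Lagrangian} submanifolds, where $(TL)^{\omega_t}=TL$.

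And $X_t$ need not vanish. Your coordinates control $\omega''_Y$ on $TT_i$, on $TT_j$ and on $T_pY$, but not the mixed terms $\omega''_Y(\partial_{u_k},\partial_{v_l})$ at other points of $T_i\cup T_j$. For example, suppose $\omega''_Y=\omega_1+u_1\,dv_1\wedge du_1$ in your chart; every hypothesis of Step 1 holds. The radial primitive is $\sigma=\tfrac{u_1}{3}(v_1\,du_1-u_1\,dv_1)$, which along $T_i=\{v=0\}$ equals $-\tfrac{u_1^2}{3}\,dv_1$. There $X_t=-\tfrac{u_1^2}{3}\,\partial_{v_2}$ is normal to $T_i$, so the flow pushes $T_i$ off itself.

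What Moser needs is a primitive vanishing on all of $TY$ at points of $T_i\cup T_j$, so that $X_t=0$ there. You can get it by correcting $\sigma$ by an exact form.
\begin{enumerate}
\item Write $\sigma=\sum_k a_k\,du_k+\sum_l b_l\,dv_l$, so that $a_k(u,0)=0$ and $b_l(0,v)=0$.
\item Set $h=-\sum_l b_l(u,0)\,v_l-\sum_k a_k(0,v)\,u_k+\sum_{k,l}c_{kl}\,u_kv_l$, with $c_{kl}=\partial_{v_l}a_k(0)=\partial_{u_k}b_l(0)$. The equality of these two numbers is exactly $\tau_p(\partial_{u_k},\partial_{v_l})=0$, which is your orthogonality at $p$.
\item Then $h$ vanishes on $T_i\cup T_j$, and $\sigma+dh$ vanishes on $TY$ along $T_i\cup T_j$. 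Its Moser flow therefore fixes both surfaces pointwise.
\end{enumerate}
Alternatively, closer to the paper's order: first use the neighborhood theorem to put $\omega''_Y$ in product form near $T_i$. Then straighten $T_j$, which is tangent to a fiber at $p$, by a symplectic isotopy fixing $T_i$.

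With Step 1 repaired, the rest of your argument works. The fiberwise radial homotopy in Step 2 automatically yields a primitive vanishing pointwise on the zero section and identically on the product regions $D(r_i)\times\Delta_{ij}$.

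A minor point: the normal framing is not unique up to homotopy relative to the intersection points. Only existence is used, and it holds because the framings prescribed on the disks $\Delta_{ij}$ have degree zero on their boundary circles.
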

More in detail, by plumbing we mean that for each $i$ there is a collection of disjoint area-preserving embeddings $D(r_j)\hookrightarrow T_i$ (call the image $\Delta_{ij})$, one for each intersection point with an $T_j$, so that $D(r_i)\times \Delta_{ij}$ and $D(r_j)\times \Delta_{ji}$ are identified by swapping the two factors.

\vspace{0.3cm}

\subsection{Double branched covers}

In the setup above, consider now a (smooth) double branched cover
\begin{equation*}
f:X\rightarrow Y
\end{equation*}
which is branched along the subcollection $\{T^{br}_{\alpha}\}_{\alpha\in A}$ of the tori $\{T_i\}$; we will denote by $\{T^{nb}_\beta\}_{\beta\in B}$ the remaining ones.
\par
In the setup of Corollary \ref{normal}, we can assume after a smooth isotopy that for each $\alpha\in A$ the map $f$ in a suitable neighborhood $U_\alpha\equiv f^{-1}(T^{br}_{\alpha})\times \tilde{D_\alpha}$ of} $f^{-1}(T^{br}_{\alpha})\subset X$ 
is given by
\begin{align*}
\tilde{D}_\alpha\times f^{-1}(T^{br}_{\alpha}) &\rightarrow D_\alpha\times T^{br}_{\alpha}\\
(z,x)&\mapsto (z^2,f(x)),
\end{align*}
where the latter is the neighborhood in the plumbing given by Corollary \ref{normal}.
\\
\par
Of course, the pull-back form $f^*\omega_Y''$ is symplectic outside of the {preimage $f^{-1}(R)$ of the} ramification locus 
$R=\coprod T^{br}_{\alpha}$. 
Our goal is to prove the following.
\begin{prop}
In the setup above, there is a symplectic form $\omega_X$ which coincides with $f^*\omega_Y''$ outside a tubular neighborhood {of the preimage $f^{-1}(R)$} of the ramification locus $R\subset Y$ for which all the preimages $f^{-1}(T^{nb}_\beta)$ of the non-branching tori in the collection $\{T_i\}$ are symplectic submanifolds.
\end{prop}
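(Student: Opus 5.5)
The plan is to modify $f^*\omega_Y''$ only inside the product neighborhoods $U_\alpha\cong \tilde D_\alpha\times f^{-1}(T^{br}_\alpha)$, using the standard local model for branched covers (as in Auroux). On $D_\alpha\times T^{br}_\alpha$ the form $\omega_Y''$ is, by Corollary \ref{normal}, the product $\omega_0\oplus\omega''_Y|_{T^{br}_\alpha}$. Under $(z,x)\mapsto(z^2,f(x))$ it therefore pulls back to
$$4|z|^2\,\omega_0\oplus f^*\big(\omega''_Y|_{T^{br}_\alpha}\big).$$
This degenerates exactly along $\{0\}\times f^{-1}(T^{br}_\alpha)$, and only in the $\tilde D_\alpha$ direction. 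The second factor is an area form on the surface $f^{-1}(T^{br}_\alpha)$, which maps diffeomorphically to $T^{br}_\alpha$ since the branching is simple. The fix is to replace the radial factor by $\omega_0 + d\big(\chi(|z|)\,\lambda\big)$-type corrections. Concretely, I would write $4|z|^2\omega_0=d(h(|z|^2)\,d\theta)$ with $h(s)=s^2/2$ in polar coordinates, up to normalization. Then I would replace $h$ by a function $\tilde h$ that is strictly increasing, has $\tilde h(s)=s/2\cdot c$ near $s=0$ (so that the form is a positive multiple of $\omega_0$ there), and agrees with $h$ near $|z|=\tilde r_\alpha$. The resulting form $\tilde h'(|z|^2)\,\omega_0\oplus f^*\omega''_Y|_{T^{br}_\alpha}$ is closed, is a positive product form hence symplectic, and equals $f^*\omega''_Y$ near $\partial U_\alpha$. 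Since the $U_\alpha$ are disjoint (the branching tori are disjoint), this defines $\omega_X$ globally.

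The remaining point is the non-branching tori. Away from the $U_\alpha$, each $f^{-1}(T^{nb}_\beta)$ is symplectic for $f^*\omega''_Y$, since $f$ is a local symplectomorphism off the branch locus. Inside $U_\alpha$, by Corollary \ref{normal} and the plumbing structure, $T^{nb}_\beta$ meets $D_\alpha\times T^{br}_\alpha$ exactly in the fibers $D_\alpha\times\{p\}$ over the intersection points $p\in T^{br}_\alpha\cap T^{nb}_\beta$, using orthogonality from the previous lemma. So $f^{-1}(T^{nb}_\beta)\cap U_\alpha$ is a union of discs $\tilde D_\alpha\times\{\tilde p\}$, where $\tilde p=f^{-1}(p)$. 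The ramified preimage of a disc transverse to the branch locus is again a smooth disc (namely $z\mapsto z^2$ restricted to the fiber), so $f^{-1}(T^{nb}_\beta)$ is a smooth surface. On these fiber discs, $\omega_X$ restricts to $\tilde h'(|z|^2)\omega_0$, which is positive everywhere, including at $z=0$, with the orientation pulled back from $T^{nb}_\beta$.

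The main obstacle I expect is ensuring that the isotopy putting $f$ into the product normal form $(z,x)\mapsto(z^2,f(x))$ is compatible with the plumbing model. Specifically, the fibers $\tilde D_\alpha\times\{\tilde p\}$ must coincide with the actual preimages of the non-branching tori, and not merely be isotopic to them. I would handle this by choosing the smooth structure on $X$ and the branched-cover chart after fixing the plumbing coordinates of Corollary \ref{normal}. Concretely, I would define the normal form using exactly the disc factor $D_\alpha$ of that corollary, in which the transverse $T^{nb}_\beta$ are literally fibers. With this choice no further perturbation is needed, and the construction above applies verbatim.
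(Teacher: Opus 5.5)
Your proof is correct and rests on the same idea as the paper's (Auroux's construction). You add to $f^*\omega''_Y$ an exact $2$-form supported near $f^{-1}(R)$ that restores positivity in the normal disc direction, and you use that inside $U_\alpha$ the preimages of the non-branching tori are exactly the fiber discs $\tilde D_\alpha\times\{\tilde p\}$.

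The implementation differs:
\begin{itemize}
\item The paper works point by point in the model $(z_1,z_2)\mapsto(z_1^2,z_2)$. It adds $\varepsilon\sum_p d[\chi_1(|z_1|)\chi_2(|z_2|)x_1\,dy_1]$ over finitely many $p\in R$, and concludes by openness of nondegeneracy for $\varepsilon$ small.
\item You exploit the global product structure of $U_\alpha$, and of $\omega''_Y$ on $D_\alpha\times T^{br}_\alpha$, from Corollary~\ref{normal}. You modify only the disc factor, rotationally symmetrically and with no cutoff along $R$. The result is literally a product of two area forms, so nondegeneracy and positivity on the fiber discs need no estimate.
\end{itemize}
The only smallness argument in your version is hidden in the existence of $\tilde h$. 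For instance, take $\tilde h=h+\varepsilon k$ with $k(s)=s$ near $0$, $k$ supported in $[0,\tilde r_\alpha^2)$, and $\varepsilon$ small. This is precisely the rotationally symmetric version of the paper's correction.

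The price of your route is that you need the normal form $(z,x)\mapsto(z^2,f(x))$ on all of $U_\alpha$, compatibly with the plumbing. The paper's local argument only ever uses the model near individual points of $R$. The paper does grant the global normal form in the setup (``after a smooth isotopy''), so the obstacle in your last paragraph is already a hypothesis.

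Your own way of securing it is slightly too quick, however. Suppose the cover has monodromy along push-offs of $T^{br}_\alpha$ with respect to the plumbing framing. Then fixing those coordinates first only yields $(z,x)\mapsto(u(x)z^2,f(x))$ for some circle-valued map $u$, so the construction does not apply verbatim. It survives once $d\theta$ is replaced by the closed form $d\theta+\tfrac12u^*d\vartheta$, where $d\vartheta$ is the angular form on $S^1$. Indeed, $f^*\omega''_Y$ then equals $d\bigl(h(|z|^2)(d\theta+\tfrac12u^*d\vartheta)\bigr)$ plus the pulled-back area form on $f^{-1}(T^{br}_\alpha)$. The extra term $\tfrac12\tilde h'\,d(|z|^2)\wedge u^*d\vartheta$ contributes nothing to the top power of the corrected form and vanishes on the fiber discs.
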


The construction of the form $\omega_X$ is the one given in \cite{Aur} (whose analysis further includes cusp singularities in order to exhaust the singularity types of a generic holomorphic map). We reproduce it here in detail as we will additionally check that the preimages of the remaining submanifolds are symplectic.

\begin{proof}
Because of the work we have done regarding local models, and in particular Corollary \ref{normal}, we can focus on the simple case of the branched cover between neighborhoods of the origin in $\mathbb{C}^2$ (equipped with the standard symplectic form $\omega_{std}$) given by
\begin{equation*}
f(z_1,z_2)=(z_1^2,z_2).
\end{equation*}
In this model the {preimage of the} ramification locus is given by 
\begin{equation*}
\{z_1=0\}=\{0\}\times \mathbb{C}\quad(=R),
\end{equation*}
while its symplectic orthogonal (which is the model for the preimage remaining non-branching $T_i$, near the intersection points) is given by
\begin{equation}\label{wantsymp}
V:=\{z_2=0\}=\mathbb{C}\times \{0\}=\mathrm{ker}df_{(0,0)}\quad (=f^{-1}(T^{nr})).
\end{equation}
One readily computes that in these coordinates ($z_j=x_j+iy_j$, $j=1,2$)
\begin{equation*}
f^*\omega_{std}=4(x_1^2+y_1^2)\cdot dx_1\wedge dy_1+dx_2\wedge dy_2,
\end{equation*}
so that it vanishes on $V$ at the origin. In order to remedy this we will adjust it by exact forms defined in the local models  so that we get a symplectic form on $\mathbb{C}^2$ for which $V$ is a symplectic submanifold.
\par
Given $p\in R$, looking at the local models consider the $2$-form
\begin{equation*}
\alpha_p=d\left[\chi_1(|z_1|)\chi_2(|z_2|) x_1dy_1\right]
\end{equation*}
where
\begin{equation*}
\chi_i:\mathbb{R}^{\geq0}\rightarrow \mathbb{R}^{\geq0}
\end{equation*}
are smooth non-increasing compactly supported functions which are $1$ near $0$, the specific shape of which is to be decided. Explicitly, $\alpha_p$ involves three terms of which only two only involve $dx_1\wedge dy_1$, namely
\begin{equation*}
\chi_2(|z_2|)\chi_1(|z_1|)\cdot dx_1\wedge dy_1\text{ and }x_1\chi_2(|z_2|)\cdot d\chi_1(|z_1|)\wedge dy_1
\end{equation*}
Note that the first form coincides with $dx_1\wedge dy_1$ in a neighborhood of $p$, while the latter vanishes identically in a neighborhood of $p$. Therefore by compactness, summing the forms corresponding to a suitable finite set of $R$ (and for suitable choices of the $\chi_i$), we obtain a closed $2$-form $\alpha$ on $X$ with the property that $f^*\omega_Y+\varepsilon \alpha$ is readily checked to be a symplectic form on $X$ for which the preimages of the non-branching tori is symplectic.\end{proof}

\subsection{Conclusion of the proof of the main Theorem.}
By Proposition \ref{80tori}, our main example $M$ admits a four-fold branched cover
\begin{equation*}
p\colon  M\rightarrow \Sigma\times\Sigma    
\end{equation*}
which by Proposition \ref{twodoublebranch} factors as a composition of two double branched covers
\begin{equation*}
M\stackrel{p_2}{\longrightarrow} N \stackrel{p_1}{\longrightarrow}\Sigma\times\Sigma 
\end{equation*}
The image of the ramification locus of $p$ in $\Sigma\times \Sigma$ is a collection of 80 oriented product tori which are therefore Lagrangian for any product symplectic structure. Furthermore, they intersect each other positively in at most one point. We need to check that the \textit{global} assumption (2) in Lemma \ref{coho} holds in our situation.
\begin{lemma}
For the collection of Lagrangian tori $\{T_i\}\subset\Sigma\times\Sigma $ forming image of the ramification locus of $p$, there exists a cohomology class $[\eta]$ such that $\langle[\eta],[T_i]\rangle>0$ for all $i$.
\end{lemma}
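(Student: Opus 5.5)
The plan is to take $[\eta]$ to be the Poincaré dual of the total homology class of the ramification tori themselves, and to compute its pairing with each $[T_i]$ as a sum of intersection numbers. Concretely, orient the 80 tori $\{T_j\}$ as in Proposition \ref{80tori}, so that every transverse intersection between two of them is positive for the product orientation on $\Sigma\times\Sigma$. Set
\begin{equation*}
[\eta]=\mathrm{PD}\Big(\sum_{j}[T_j]\Big)\in H^2(\Sigma\times\Sigma;\mathbb{R}).
\end{equation*}
Then for each $i$,
\begin{equation*}
\langle[\eta],[T_i]\rangle=\sum_j [T_j]\cdot[T_i]=[T_i]\cdot[T_i]+\sum_{j\neq i}[T_j]\cdot[T_i].
\end{equation*}

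The two terms on the right are handled as follows.

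First, the self-intersection vanishes. Each $T_i$ is a product torus $\gamma\times\delta$ of simple closed curves in $\Sigma$. Pushing it off to $\gamma'\times\delta'$, with $\gamma',\delta'$ parallel copies disjoint from $\gamma,\delta$, gives a disjoint representative of the same class. Equivalently, via the Künneth formula, $[\gamma\times\delta]^2=\pm(\gamma\cdot\gamma)(\delta\cdot\delta)=0$. So $[T_i]\cdot[T_i]=0$.

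Second, for $j\neq i$ the tori $T_i$ and $T_j$ are either disjoint or meet transversely in a single point. Since homological intersection is computed by the signed count of transverse intersection points, $[T_j]\cdot[T_i]$ equals $0$ in the first case and $+1$ in the second, by the positivity statement in Proposition \ref{80tori}. The same proposition says each torus meets exactly four others, so $\langle[\eta],[T_i]\rangle=4>0$ for every $i$. Any closed form representing this class serves as the $\eta$ required in Lemma \ref{coho}.

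The main point needing care is the bookkeeping of orientations and signs. The positivity of the intersections was obtained in the proof of Proposition \ref{80tori} only after reorienting the second factor (mirroring $D_2$). I must therefore use that same convention both for the product symplectic form, so that the tori remain Lagrangian, which holds for any orientation choice, and for computing intersection numbers. I must also check that geometrically disjoint pairs, such as the parallel tori $\tilde\alpha_1\times\tilde\beta$ and $\tilde\alpha_2\times\tilde\beta$, contribute zero. This follows because disjoint cycles have intersection number zero; it does not depend on their being homologous. No nontriviality of the classes is needed, since positivity comes directly from the count of intersection points.
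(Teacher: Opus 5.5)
Your proof is correct, but it follows a different route from the paper's.

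The paper argues via a Gordan-type alternative, i.e. hyperplane separation. A class $[\eta]$ with $\langle[\eta],[T_i]\rangle>0$ for all $i$ exists if and only if there is no nontrivial relation $\lambda_{i_1}[T_{i_1}]+\dots+\lambda_{i_k}[T_{i_k}]=0$ with positive coefficients. Such a relation is ruled out by intersecting it with a torus $T_j$ that meets $T_{i_1}$ once, which forces $\lambda_{i_1}=0$.

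You instead exhibit an explicit witness, $[\eta]=\mathrm{PD}(\sum_j[T_j])$, and compute that its pairing with every $[T_i]$ equals $4$. Both arguments use the same input from Proposition~\ref{80tori}:
\begin{itemize}
\item every intersection number $[T_j]\cdot[T_i]$ is nonnegative, including the vanishing self-intersections;
\item every torus meets at least one other torus positively.
\end{itemize}
The paper uses the vanishing self-intersections tacitly, when $T_j$ happens to be one of the $T_{i_r}$, whereas you verify them explicitly. Under these hypotheses the unweighted sum always works. Your version is therefore the more elementary one: it avoids the separation theorem and produces a concrete class. The paper's dual formulation is an exact criterion, and in other configurations (for instance with mixed signs) it could succeed where the unweighted sum fails. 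Here, however, its verification uses exactly the same sign information as yours, so nothing extra is gained.

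One small remark on your last paragraph: for this lemma the ambient orientation convention does not matter. What matters is that all transverse intersections have the same sign for the chosen orientations of the tori, since if they were all negative, $-\mathrm{PD}(\sum_j[T_j])$ would work instead. Positivity with respect to the symplectic orientation only becomes essential in the subsequent orthogonality lemma.
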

\begin{proof}
Such a class $[\eta]$ exists if and only there exists an open half space containing all classes $\{[T_i]\}\subset H_2(\Sigma\times \Sigma;\mathbb{R})$, or, equivalently, there is no non-trivial linear combination
\begin{equation*}
\lambda_{i_1}[T_{i_1}]+\dots+\lambda_{i_k}[T_{i_k}]=0
\end{equation*}
with all coefficients positive. Indeed, such a linear combination cannot exists, because by Proposition \ref{80tori} there is a class $[T_j]$ such that $[T_j]\cdot [T_{i_1}]=1$ (and $[T_j]\cdot [T_{i_r}]\geq 0$ for $r=1\dots k$), so that taking intersections one obtains $\lambda_{i_1}=0$.
\end{proof}

Therefore we can apply the constructions in the previous two subsections to the double branched cover $p_1$ to obtain a symplectic structure on $N$ for which the preimage of the non-branching tori of $p_1$ (which are the image of the ramification locus of $p_2$) are symplectic. We can then apply directly the construction of Auroux discussed in the previous subsection to the double branched cover $p_2$ to obtain the desired symplectic structure on $M$.
\\
\par
Finally, to obtain infinitely many examples in our main Theorem, one can simply pull-back the branched cover $p$ along covering spaces $f:B\rightarrow \Sigma\times\Sigma$.

\vspace{0.5cm}

\bibliographystyle{alpha}
\bibliography{biblio}

\end{document}